\newtheorem{theorem}{Theorem}[section]
\newtheorem{proposition}[theorem]{Proposition}
\newtheorem{lemma}[theorem]{Lemma}
\newtheorem*{Def*}{Definition}
\theoremstyle{definition}
\newtheorem{remark}[theorem]{Remark}
\numberwithin{equation}{section}
\theoremstyle{plain}
\begin{document}
	
\title{Central limit theorems for the real eigenvalues of large Gaussian random matrices}

\author{N. J. Simm}
\affil{Mathematics Institute, University of Warwick, Coventry, CV4 7AL, UK}
\date{}
	\maketitle
	\bigskip
	\abstract{Let $G$ be an $N \times N$ real matrix whose entries are independent identically distributed standard normal random variables $G_{ij} \sim \mathcal{N}(0,1)$. The eigenvalues of such matrices are known to form a two-component system consisting of purely real and complex conjugated points. The purpose of this note is to show that by appropriately adapting the methods of \cite{KPTTZ15}, we can prove a central limit theorem of the following form: if $\lambda_{1},\ldots,\lambda_{N_{\mathbb{R}}}$ are the real eigenvalues of $G$, then for any even polynomial function $P(x)$ and even $N=2n$, we have the convergence in distribution to a normal random variable
	\begin{equation}
	\frac{1}{\sqrt{\mathbb{E}(N_{\mathbb{R}})}}\left(\sum_{j=1}^{N_{\mathbb{R}}}P(\lambda_{j})-\mathbb{E}\sum_{j=1}^{N_{\mathbb{R}}}P(\lambda_{j})\right) \to \mathcal{N}(0,\sigma^{2}(P))
	\end{equation}
	as $n \to \infty$, where $\sigma^{2}(P) = \frac{2-\sqrt{2}}{2}\int_{-1}^{1}P(x)^{2}\,dx$.}
\section{Introduction}
How many eigenvalues of a random matrix are real? This very natural and fundamental question was asked in 1994 by Edelman, Kostlan and Shub \cite{EKS94} who proved that if $G$ is an $N \times N$ matrix of independent identically distributed standard normal variables, and $N_{\mathbb{R}}$ is the number of real eigenvalues of $G$, then
\begin{equation}
\mathbb{E}(N_{\mathbb{R}}) = \sqrt{2N/\pi}+O(1), \qquad N \to \infty \label{edelman}.
\end{equation}
Note that we are not assuming $G$ is symmetric, in the usual parlance we say that $G$ belongs to the so-called Ginibre ensemble of real non-Hermitian random matrices, first considered by Ginibre in 1965 \cite{Gin65}.

In addition to being of instrinsic mathematical interest, the statistics of non-Hermitian matrices also have important applications. The earliest such application is probably due to May \cite{M72} who showed that real random matrices describe the stability properties of large biological systems. Very recently it was shown \cite{FK15} that the counting of the average number of equilibria in a non-linear analogue of May's model can be mapped to the problem of $N_{\mathbb{R}}$ and to the density of real eigenvalues in the Ginibre type ensembles. See also \cite{FD14,MHNSS15} for further applications of $N_{\mathbb{R}}$ to the enumeration of equilibria in complex systems. The question of \textit{fluctuations} in such contexts is usually extremely difficult and has only recently begun to receive attention \cite{S15}. 

The purpose of this article is to describe the asymptotic central limit theorem fluctuations around Edelman and company's estimate \eqref{edelman}. In other words, thinking of \eqref{edelman} as a law of large numbers, what happens when one recenters $N_{\mathbb{R}}$ with respect to its expectation and studies the convergence in law of the fluctuating remainder?

Our approach to this problem is based on a formalism recently developed in \cite{KPTTZ15}, which allowed the authors to characterize the large deviation behaviour for the probability of an anomalously small number of real eigenvalues of $G$. We will show how it is possible to adapt their methods to prove a central limit theorem for the number of real eigenvalues, in addition to the following generalization. From now on let $N=2n$ be even and denote the real eigenvalues of $G$ by $\lambda_{1},\lambda_{2},\ldots,\lambda_{N_{\mathbb{R}}}$. The quantity,
\begin{equation}
X_{n}^{\mathbb{R}}(P) = \sum_{j=1}^{N_{\mathbb{R}}}P(\lambda_{j}/\sqrt{N}), \label{linstat}
\end{equation}
is known as a \textit{linear statistic} (but crucially, note that we only sum the real eigenvalues). The count of real eigenvalues is the special case $X^{\mathbb{R}}_{n}(1) = N_{\mathbb{R}}$.
\begin{theorem}
	\label{th:var}
The variance of the total number of real eigenvalues of the standard $2n \times 2n$ Ginibre real random matrix is given by
\begin{equation}
\mathrm{Var}(N_{\mathbb{R}}) = \frac{2\sqrt{2}}{\sqrt{\pi}}\sum_{k=1}^{n}\frac{\Gamma(2k-3/2)}{\Gamma(2k-1)}-\frac{2}{\pi}\sum_{k_{1}=1}^{n}\sum_{k_{2}=1}^{n}\frac{\Gamma(k_{1}+k_{2}-3/2)^{2}}{\Gamma(2k_{1}-1)\Gamma(2k_{2}-1)} \label{exact}
\end{equation}
and has $n \to \infty$ asymptotics given by
\begin{equation}
\mathrm{Var}(N_{\mathbb{R}}) = (2-\sqrt{2})\mathbb{E}(N_{\mathbb{R}})+O(1), \qquad n \to \infty \label{asympt}
\end{equation}
\end{theorem}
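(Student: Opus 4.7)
My plan exploits the fact that the real eigenvalues of the real Ginibre ensemble form a Pfaffian point process. Concretely, there is a $2\times 2$ matrix kernel built from three scalar kernels $S_n(x,y)$, $D_n(x,y)$, $I_n(x,y)$ (available in closed form as skew-orthogonal polynomial kernels, or in the equivalent representation used in \cite{KPTTZ15}) such that
\begin{equation*}
\rho_1(x)=S_n(x,x),\qquad \rho_2(x,y)-\rho_1(x)\rho_1(y)=D_n(x,y)I_n(x,y)-S_n(x,y)S_n(y,x).
\end{equation*}
Combined with the standard identity
\begin{equation*}
\mathrm{Var}(N_\mathbb{R})=\mathbb{E}(N_\mathbb{R})+\iint_{\mathbb{R}^2}\bigl[\rho_2(x,y)-\rho_1(x)\rho_1(y)\bigr]\,dx\,dy,
\end{equation*}
the problem reduces to evaluating two explicit double integrals.

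To obtain the exact formula \eqref{exact}, I would plug in the truncated-series representations of $S_n,D_n,I_n$ (they are Gaussian-weighted polynomials of degree $\le 2n-1$ together with an explicit correction term involving the incomplete gamma function). After expanding and integrating term-by-term, everything reduces to Gaussian moments $\int_\mathbb{R} x^{2k}e^{-x^2}\,dx=\tfrac12\Gamma(k+\tfrac12)$ and incomplete gamma evaluations. The orthogonality-type cancellations in the skew-orthogonal polynomial construction should leave exactly one single sum (from the diagonal/$\mathbb{E}(N_\mathbb{R})$ bookkeeping and the $DI$ contribution) and one double sum (from $\iint S_n(x,y)S_n(y,x)\,dx\,dy$), matching \eqref{exact}. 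This is essentially an exercise in careful bookkeeping; the KPTTZ15 formalism is designed to make such manipulations tractable.

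The hard part is the asymptotic analysis \eqref{asympt}, because it hinges on a precise cancellation: one checks that the first sum contributes $2\,\mathbb{E}(N_\mathbb{R})+O(1)$ and the second contributes $\sqrt{2}\,\mathbb{E}(N_\mathbb{R})+O(1)$, and only their difference yields the claimed coefficient $2-\sqrt{2}$. For the single sum, Stirling gives $\Gamma(2k-3/2)/\Gamma(2k-1)\sim(2k)^{-1/2}(1+O(1/k))$, so Euler--Maclaurin and the identity $\mathbb{E}(N_\mathbb{R})=\sqrt{4n/\pi}+O(1)$ immediately yield $\tfrac{2\sqrt{2}}{\sqrt\pi}\sum_{k=1}^n\Gamma(2k-3/2)/\Gamma(2k-1)=2\,\mathbb{E}(N_\mathbb{R})+O(1)$. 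For the double sum, I would change variables $k_1=j+\ell$, $k_2=j-\ell$ and apply Stirling to the ratio, which gives
\begin{equation*}
\frac{\Gamma(k_1+k_2-3/2)^2}{\Gamma(2k_1-1)\Gamma(2k_2-1)}\ \sim\ \frac{1}{2j}\bigl[(1+\ell/j)^{1+\ell/j}(1-\ell/j)^{1-\ell/j}\bigr]^{-2j},
\end{equation*}
an expression sharply concentrated on the diagonal $\ell=0$ with Gaussian profile $\exp(-2\ell^2/j)$. A Laplace-method argument then shows that for each fixed $j$ the inner $\ell$-sum contributes $\sim\sqrt{\pi j/2}\cdot(2j)^{-1}$, and the remaining $j$-sum evaluates to $(\sqrt{2}/2)\sqrt{2n/\pi}+O(1)$; this identifies the asymptotic $\sqrt{2}\,\mathbb{E}(N_\mathbb{R})+O(1)$.

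The main obstacle is not the leading-order Gaussian heuristic but the $O(1)$ error control, since uniform Stirling estimates must be carried to subleading order and the Laplace approximation must be justified uniformly across the range of $j$, including the regime $j\asymp n$ near the edge where the Gaussian profile interacts with the boundary of the summation domain. This likely requires separating the sum into a bulk regime $j\le n(1-\delta)$ where standard saddle/Gaussian bounds apply and an edge regime where an incomplete-gamma-type estimate shows the contribution is uniformly $O(1)$, mirroring the edge scaling of the real Ginibre ensemble.
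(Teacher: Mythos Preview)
Your plan is sound but follows a genuinely different route from the paper on both halves of the theorem.

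For the exact formula \eqref{exact}, the paper does not work with the $(S_n,D_n,I_n)$ correlation kernels at all. Instead it starts from a Sinclair--Forrester--Nagao Pfaffian identity for the generating function $\mathbb{E}\exp(sX_N^{\mathbb{R}}(f))$, which for even $f$ collapses to an $n\times n$ determinant $\det(I+M(s))$ with $M(s)_{jk}=A[e^{sf(x)+sf(y)}-1]_{2j-1,2k}/\sqrt{2\pi\Gamma(2j-1)\Gamma(2k-1)}$. Expanding $\log\det$ gives a cumulant formula; setting $f\equiv 1$ and $l=2$ yields the variance immediately once one evaluates the single scalar integral $A[1]_{2j-1,2k}=\Gamma(j+k-3/2)$. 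This sidesteps the term-by-term Gaussian/incomplete-gamma bookkeeping you anticipate, and no ``orthogonality-type cancellations'' are needed because the skew-orthogonality is already built into the choice of basis polynomials.

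For the asymptotics of the double sum, the paper again takes a different path from your diagonal Laplace argument. It squeezes the sum: the upper bound extends the $k_2$-summation to infinity, producing a closed-form ${}_2F_1(k_1-\tfrac12,k_1-\tfrac12;\tfrac12;\tfrac14)$ whose large-parameter asymptotics are quoted from the literature; the lower bound writes $\Gamma(k_1+k_2-3/2)$ as a Gaussian integral, sums to a truncated $\cosh_{n-1}$, and invokes the KPTTZ bound $\cosh_{n-1}(nx_1x_2)\ge h_n e^{nx_1x_2}\mathbf{1}(x_1x_2<T_n)$ followed by a change of variables and a Laplace-type evaluation of the resulting integral. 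Your rotated-coordinates approach $(k_1,k_2)\to(j,\ell)$ with the Gaussian profile $e^{-2\ell^2/j}$ is more elementary and morally the discrete analogue of the paper's lower-bound calculation; either route recovers the leading constant.

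One honest caveat: the paper's squeeze argument, as written, establishes only that the normalised double sum converges to the stated limit, i.e.\ it gives $o(\sqrt{n})$ rather than the $O(1)$ remainder asserted in \eqref{asympt}. So your worry about the subleading control is well-founded, and neither your sketch nor the paper's Proposition~\ref{prop:dblsum} fully nails down the $O(1)$ without further refinement (the paper defers to \cite{FN07} for this).
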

Let us note that the asymptotics \eqref{asympt} also appear in \cite{FN07} and weaker variance estimates (without the constant $2-\sqrt{2}$) were obtained in \cite{TV15} for non-Gaussian matrices. The same asymptotics (including the constant) apply to the generalized eigenvalue problem of real Ginibre matrices \cite{FM12}. Formulae \eqref{exact} and \eqref{asympt} are proved in Section \ref{sec:cov}, including a generalization to the variance of \eqref{linstat} for $P$ an even polynomial, see Proposition \ref{prop:cov}. We also have a central limit theorem for linear statistics:
\begin{theorem}
\label{th:main}
Let $P(x)$ be any even polynomial with real coefficients and let $N=2n$ be even. Then in the limit $n \to \infty$, we have the convergence in distribution
\begin{equation}
\frac{1}{\sqrt{\mathbb{E}(N_{\mathbb{R}})}}(X_{n}^{\mathbb{R}}(P) - \mathbb{E}(X_{n}^{\mathbb{R}}(P)) \to \mathcal{N}(0,\sigma^{2}(P)) \label{normconv}
\end{equation}
where $\mathcal{N}(0,\sigma^{2}(P))$ denotes the normal distribution with mean $0$ and variance
\begin{equation}
\sigma^{2}(P) := \frac{2-\sqrt{2}}{2}\int_{-1}^{1}P(x)^{2}\,dx \label{limvar}
\end{equation}
\end{theorem}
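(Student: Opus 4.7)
The plan is to establish the central limit theorem by the method of cumulants. The real eigenvalues $\lambda_1,\dots,\lambda_{N_\mathbb{R}}$ of the real Ginibre ensemble form a Pfaffian point process whose $k$-point correlation functions take the form $\rho_k(x_1,\dots,x_k) = \Pf[\mathbb{K}_N(x_i,x_j)]_{i,j=1}^{k}$ for an explicit $2 \times 2$ matrix kernel $\mathbb{K}_N$ built from Hermite-type polynomials (see \cite{FN07,KPTTZ15}). From this structure, the cumulants of $X_n^\mathbb{R}(P)$ admit a cyclic trace expansion analogous to Soshnikov's formula in the determinantal case, involving alternating products of $\mathbb{K}_N$ and multiplication by $P_n(x) := P(x/\sqrt{N})$. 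To deduce convergence to $\mathcal{N}(0,\sigma^2(P))$, it is enough to prove that the $k$-th cumulant of $Y_n := (X_n^\mathbb{R}(P)-\mathbb{E} X_n^\mathbb{R}(P))/\sqrt{\mathbb{E}(N_\mathbb{R})}$ converges to $\sigma^2(P)$ for $k=2$ and to $0$ for $k \geq 3$; the method of moments (Fr\'echet--Shohat) then yields convergence in distribution.

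The variance asymptotics are provided, for $P=1$, by Theorem \ref{th:var}, and their extension to arbitrary even polynomial $P$ is the content of Proposition \ref{prop:cov} in Section \ref{sec:cov}. Concretely, the double integral representing the covariance splits into a diagonal contribution of the form $\int P(x)^2\,\rho_1(x)\,dx$ and an off-diagonal piece involving the truncated two-point function; substituting the bulk asymptotics of $\mathbb{K}_N$ at scale $\sqrt{N}$ yields, after rescaling $x \mapsto x/\sqrt{N}$, the integral $\frac{2-\sqrt{2}}{2}\int_{-1}^{1}P(x)^2\,dx$ times $\mathbb{E}(N_\mathbb{R})$. The evenness assumption on $P$ simplifies this step by ensuring that contributions from antisymmetric components of $\mathbb{K}_N$ cancel under integration against the symmetric limiting density on $(-1,1)$.

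For the higher cumulants, the key estimate is that each cyclic trace
\[
\trace\bigl((P_n\mathbb{K}_N)^{k}\bigr)
\]
is $O(\sqrt{N})=O(\mathbb{E}(N_\mathbb{R}))$. Heuristically, a single factor of $\sqrt{N}$ is produced by the effective size $[-\sqrt{N},\sqrt{N}]$ of the support of the real-eigenvalue density, while each additional factor of $\mathbb{K}_N$ contributes only a bounded quantity because the kernel contracts two coordinates at microscopic distance. Once this bound is established for every $k \geq 2$, dividing by $\mathbb{E}(N_\mathbb{R})^{k/2} \asymp N^{k/4}$ forces every cumulant of order $k \geq 3$ to vanish in the limit, leaving only the Gaussian variance at $k=2$.

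The main obstacle is verifying the $O(\sqrt{N})$ bound uniformly in $k$, which requires control over the kernel in three distinct regimes: the bulk ($|x| \ll \sqrt{N}$), the edge ($|x| \sim \sqrt{N}$), and the exterior ($|x| \gg \sqrt{N}$). Here I would rely on the integrable decomposition of $\mathbb{K}_N$ developed in \cite{KPTTZ15}: it expresses the kernel as a sum of components with well-understood asymptotic behaviour and Gaussian suppression outside the bulk, so that the traces reduce, after rescaling, to integrals against explicit limiting functions on $(-1,1)$. Tracking the combinatorics of the cyclic expansion to confirm that no cancellations are lost and that the total contribution to each cumulant remains $O(\sqrt{N})$ is the step at which the adaptation of the large-deviation machinery of \cite{KPTTZ15} to the present linear-statistic problem demands the most care.
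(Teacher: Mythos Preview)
Your high-level plan---prove convergence via cumulants, match the variance to $\sigma^2(P)$, and show that every cumulant of order $k\geq 3$ is $O(\sqrt{N})$ so that normalisation by $\mathbb{E}(N_\mathbb{R})^{k/2}$ kills it---is exactly the paper's strategy. But the technical route you sketch is not the one the paper takes, and your sketch leaves the hard step essentially unaddressed.

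The paper never works with the correlation kernel $\mathbb{K}_N$ or a Soshnikov-type trace formula. Instead, because $P$ is even, the Pfaffian moment generating function collapses to an honest $n\times n$ determinant (formula \eqref{detrem}), and the cumulants are traces of products of \emph{finite} matrices $M^{(\nu)}[P]$ whose entries are the explicit integrals $A[x^r y^s]_{2j-1,2k}$ (Lemma~\ref{lem:cumu}). These entries are evaluated exactly in terms of Gamma functions plus a controllable remainder (Lemma~\ref{lem:exact}). The $O(\sqrt{n})$ bound on the higher cumulants is then obtained not by kernel asymptotics but by rewriting the Gamma factors as Gaussian integrals, summing the $k_i$ to produce truncated $\cosh$ factors, bounding all but one of them and representing the last as a contour integral, applying Wick's formula to the resulting multivariate Gaussian, and finally estimating the contour integral by deforming to the branch cut at $z=1$. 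This is precisely the machinery of \cite{KPTTZ15}, adapted from the counting statistic to polynomial linear statistics; it is algebraic/contour-integral in nature, not a bulk/edge/exterior dissection of $\mathbb{K}_N$.

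Your proposed route---control $\trace((P_n\mathbb{K}_N)^k)$ via pointwise kernel asymptotics in three regimes---is closer in spirit to \cite{Kop15} than to the present paper, and could in principle be made to work, but two points need attention. First, for a Pfaffian process the cumulant formula is not simply a cyclic trace of $(P_n\mathbb{K}_N)^k$; the $2\times 2$ block structure produces a more intricate combinatorial expansion that you would have to write down and control. Second, the sentence ``each additional factor of $\mathbb{K}_N$ contributes only a bounded quantity'' is the entire content of the theorem and cannot be left as a heuristic: the real-eigenvalue kernel does \emph{not} decay rapidly (indeed large gaps are only exponentially rare, cf.\ \eqref{mermaid}), so one genuinely needs a uniform operator-norm or Hilbert--Schmidt bound, and the edge contribution near $|x|=\sqrt{N}$ must be shown not to spoil the $O(\sqrt{N})$ estimate. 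The paper sidesteps both issues by never touching $\mathbb{K}_N$ at all.
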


For \textit{Hermitian} random matrices, results of this type continue to occupy a major industry in the field, since at least the 1980s \cite{Jon82} with work continuing unabated to the present day. A quite comprehensive treatment was given by Johansson \cite{Joh98}, who proved that for a general class of Hermitian ensembles, the linear statistic \eqref{linstat} converges as $N \to \infty$, \textit{without normalization}, to a normal random variable with finite variance. The lack of any normalization is usually interpreted as a consequence of strong correlations between the eigenvalues; indeed, for Hermitian matrices, the variance of \eqref{linstat} remains bounded in $N$. In the non-Hermitian case, including all complex eigenvalues in the sum \eqref{linstat} leads again to a bounded variance central limit theorem which is closely related to the Gaussian free field (GFF) \cite{F99,RS06,VR07,AHM11,OR14}, a log-correlated field of great importance in mathematical physics and probability, see \cite{GFF} for a survey. See also \cite{FKS13, LS15, CW15} for further relations between linear statistics of random matrices and log-correlated fields. An important question for future work could be to determine if there a process interpolating between the Poisson fluctuations of Theorem \ref{th:main} and the GFF obtained in \cite{VR07}.

The $N \to \infty$ fate of the sum \eqref{linstat} is therefore quite different to that typically encountered in random matrix theory, requiring a normalization of order $N^{-1/4}$ to ensure distributional convergence. Furthermore, linear statistics of random matrix eigenvalues involving a \textit{random number of terms} have not been studied so widely. However, the Poissonian structure of the limiting Gaussian process can be guessed at in the following way. Viewed as a point process, it is known \cite{TZ11, TKZ12,TZ14} that the unscaled law of the real Ginibre eigenvalues converges as $N \to \infty$ to a system of annihilating Brownian motions taken at time $t=1$. Since the particles move independently, except for annihilation, the terms in the sum \eqref{linstat} are approximately independent. Combined with Edelman's law \eqref{edelman} we may expect that \eqref{linstat} is close to a sum of $O(\sqrt{N})$ independent random variables, for which the classical central limit theorem is applicable. These heuristics are enough to guess \eqref{normconv}, but do not seem to explain the constant\footnote{Interestingly, in the parlance of log-gases, the $2-\sqrt{2}$ prefactor has the physical interpretation as the \textit{compressibility} of the particle system \cite{For13}.}  $2-\sqrt{2}$ in \eqref{limvar}.

For finite $N$, the real spectrum of a Ginibre matrix is not completely independent and therefore \eqref{normconv} requires its own proof. The results of \cite{For13} and \cite{Bee13} indicate that the real eigenvalues have quite interesting statistics, with linear repulsion at close range and Poisson behaviour at large spacings. Specifically, it is shown in \cite{For13} that if $p_{\mathrm{GinOE}}(s)$ is the probability density of real eigenvalue spacings, then 
\begin{equation}
\begin{split}
&p_{\mathrm{GinOE}}(s) \sim c_{0}s, \qquad \qquad s\to 0\\ 
&p_{\mathrm{GinOE}}(s) \sim c_{1}^{2}e^{-c_{1}s}, \qquad \hspace{2pt} s \to \infty \label{mermaid}
\end{split}
\end{equation}
where $c_{0} = 1/(2\sqrt{2\pi})$ and $c_{1} = \zeta(3/2)/c_{0}$. This should be contrasted with the case of random symmetric matrices which have the Wigner-Dyson form (see \cite{Meh04})
\begin{equation}
\begin{split}
&p_{\mathrm{GOE}}(s) \sim (\pi^{2}/6)s, \qquad \hspace{10pt} s \to 0\\ &p_{\mathrm{GOE}}(s) \sim e^{-(\pi s)^{2}/16}, \qquad s \to \infty
\end{split}
\end{equation}
In \cite{Bee13}, the real eigenvalues of non-Hermitian matrices are shown to characterize level crossings in a superconducting quantum dot. Although not of the Ginibre type, the ensembles considered in \cite{Bee13} seem to share the same `mermaid statistics' as \eqref{mermaid}.

Finally, as noted in \cite{FN07}, the real eigenvalues of Ginibre matrices bare a close analogy to the study of real roots of random polynomials of high degree. For a quite general class of random polynomials, variance estimates and central limit theorems for the number of real roots were obtained by Maslova \cite{Masvar,Masclt}. See \cite{TV13} for further references and recent progress in the field of random polynomials. An ensemble of random polynomials closely related to the present study are the $SO(2)$ polynomials defined by $p(x) = \sum_{j=0}^{N}c_{j}x^{j}$ where $c_{j}$ are i.i.d. Gaussian variables with mean zero and variance $\binom{N}{j}$. As for the Ginibre ensemble, the mean and variance of the number of real roots scale as $\sqrt{N}$ \cite{BD97}
\begin{equation}
\mathrm{Var}(N_{\mathbb{R}}^{\mathrm{SO(2)}}) \sim c\sqrt{N}
\end{equation}
where the constant $c=0.57173\ldots$ is close to the Ginibre constant $2-\sqrt{2}=0.5857\ldots$ in \eqref{normconv}. We do not yet have a good explanation for this closeness.

To prove Theorem \ref{th:main}, we rely on the fact that the Ginibre ensemble is a Pfaffian point process. This means that all real and complex correlation functions of the eigenvalues can be written as a Pfaffian \cite{BS09,FN07,FN08,SW08}, in addition to the class of ensemble averages described in \cite{Sin07}. These results rely on the explicit knowledge of the joint probability density function of real and complex eigenvalues \cite{LS91,E93}. In fact, for $f$ even, the moment generating function of the random variable \eqref{linstat} is actually a \textit{determinant} of size $n \times n$. In general, if $f$ is not even it is a Pfaffian of size $2n \times 2n$ that seems more difficult to analyze. From the determinantal formulae, the cumulants of \eqref{linstat} can easily be extracted, and further analysis of their asymptotic behaviour is made possible by appropriately modifying the method used in \cite{KPTTZ15}.
\\\\
\textit{Note}: During the preparation of this article, the arXiv submission \cite{Kop15} appeared, which proves Theorem \ref{th:main} under the different condition that $P$ is compactly supported inside $(-1,1)$. It is likely that combining the methods of \cite{Kop15} and the present article would yield an improved regularity condition on $P$.
	
\section{Proof of the main result}
In the first section we compute the joint cumulant generating function of linear statistics of real and complex eigenvalues. In the second section we calculate the variance and prove Theorem \ref{th:var}. In the final section we bound the higher order cumulants and establish our main result, Theorem \ref{th:main}.
\subsection{Pfaffian and determinantal structures}
The first step towards proving \eqref{normconv} is to calculate the moment generating function of the statistic \eqref{linstat}. A key role (see \cite{KG05} and \cite{Sin07}) is played by the real and complex integrals
\begin{align}
A[h(x)h(y)]_{jk} &= \frac{1}{2}\int_{\mathbb{R}}dx\,\int_{\mathbb{R}}dy\,h(x)h(y)e^{-x^{2}/2-y^{2}/2}P_{j-1}(x)P_{k-1}(y)\mathrm{sign}(y-x) \label{realint}\\
B[g(z)g(\overline{z})]_{jk} &= -2i\int_{\mathbb{C}}g(z)g(\overline{z})P_{j-1}(z)P_{k-1}(\overline{z})\mathrm{sign}(\Im(z))e^{-z^{2}/2-\overline{z}^{2}/2}\mathrm{erfc}(\sqrt{2}|\Im(z)|)\,d^{2}z \label{complexint}
\end{align}
where $\{P_{j}(x)\}_{j \geq 0}$ are a family of degree $j$ monic polynomials. We will choose them to be skew-orthogonal with respect to \eqref{realint} and \eqref{complexint}, as in \cite{FN07} where they were calculated to be
\begin{equation}
P_{2j}(x) = x^{2j}, \qquad P_{2j+1}(x) = x^{2j+1}-2jx^{2j-1} \label{skew-orthog}
\end{equation}
With these polynomials specified, the following skew-orthogonality relation is satisfied:
\begin{equation}
A[1]+B[1] = \mathrm{diag}\bigg\{\begin{pmatrix} 0 & r_{j-1}\\-r_{j-1} & 0 \end{pmatrix}\bigg\}_{j=1}^{n}
\end{equation}
where $r_{j-1} = \sqrt{2\pi}\Gamma(2j-1)$.
	\begin{proposition}
		\label{prop:jointgen}
Let $f \in L^{2}(\mathbb{R})$ and $g \in L^{2}(\mathbb{C})$ be integrable functions and consider the linear statistics
\begin{equation}
X^{\mathbb{R}}_{N}(f) = \sum_{j=1}^{N_{\mathbb{R}}}f(\lambda_{j}), \qquad X^{\mathbb{C}}_{N}(g) = \sum_{j=1}^{N_{\mathbb{C}}}g(z_{j}) \label{realstat}
\end{equation}
Then the joint cumulant generating function of \eqref{realstat} is given by:
	\begin{equation}
	\log\mathbb{E}\left(\mathrm{exp}\left(sX^{\mathbb{R}}_{N}(f)+tX^{\mathbb{C}}_{N}(g)\right)\right) = \frac{1}{2}\log\mathrm{det}\left(I_{2n}+M^{\mathbb{R}}[e^{sf(x)+sf(y)}-1]+M^{\mathbb{C}}[e^{tg(z)+tg(\overline{z})}-1]\right) \label{det}
	\end{equation}
	where $I_{2n}$ is the $2n \times 2n$ identity matrix and $M^{\mathbb{R}/\mathbb{C}}[h(x,y)]$ are $2n \times 2n$ block matrices, where block $(j,k)$ is given by
	\begin{equation}
	\begin{split}
	M^{\mathbb{R}}[h(x,y)]_{jk} &= \frac{1}{\sqrt{2\pi}\Gamma(2j-1)}\begin{pmatrix} -A[h(x,y)]_{2j,2k-1} & -A[h(x,y)]_{2j,2k}\\  A[h(x,y)]_{2j-1,2k-1} & A[h(x,y)]_{2j-1,2k}. \label{mmatrix}
	\end{pmatrix}\\
	M^{\mathbb{C}}[g(z,\overline{z})]_{jk} &= \frac{1}{\sqrt{2\pi}\Gamma(2j-1)}\begin{pmatrix} -B[g(z,\overline{z})]_{2j,2k-1} & -B[g(z,\overline{z})]_{2j,2k}\\ B[g(z,\overline{z})]_{2j-1,2k-1} & B[g(z,\overline{z})]_{2j-1,2k}
\end{pmatrix}
	\end{split}
	\end{equation}
 
	\end{proposition}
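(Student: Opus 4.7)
The plan is to derive \eqref{det} by combining the Pfaffian integration formalism for real-and-complex Pfaffian point processes \cite{Sin07,FN07,BS09} with a Pfaffian-to-determinant reduction based on the skew-orthogonal polynomial basis \eqref{skew-orthog}.

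First I would start from the Lehmann--Sommers--Edelman joint density \cite{LS91,E93} for the eigenvalues of the real Ginibre ensemble, which takes a uniform Vandermonde $\times$ Gaussian form across all sectors indexed by the number of real eigenvalues. Incorporating the exponential weights $\prod_i e^{sf(\lambda_i)}$ and $\prod_j e^{tg(z_j)+tg(\overline{z_j})}$ amounts to replacing the single-particle weights $e^{-x^{2}/2}$ and $e^{-z^{2}/2}$ by $e^{sf(x)}e^{-x^{2}/2}$ and $e^{tg(z)}e^{-z^{2}/2}$ on the real line and upper half-plane respectively. I would then invoke the de Bruijn-type Pfaffian integration theorem to collapse both the sum over sectors and the multidimensional integrations into a single $2n \times 2n$ Pfaffian, obtaining
\begin{equation*}
\mathbb{E} \exp\!\bigl(s X_N^{\mathbb{R}}(f) + t X_N^{\mathbb{C}}(g)\bigr) = \frac{\Pf\!\bigl(A[e^{sf(x)+sf(y)}] + B[e^{tg(z)+tg(\overline{z})}]\bigr)}{\Pf\!\bigl(A[1] + B[1]\bigr)}.
\end{equation*}
Both matrices are genuinely skew-symmetric, since swapping the dummy integration variables flips $\mathrm{sign}(y-x)$ in $A$ and, together with $z \leftrightarrow \overline{z}$, flips $\mathrm{sign}(\Im z)$ in $B$.

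Next I would use the skew-orthogonality relation to put the denominator into the block-diagonal form $D := A[1] + B[1] = \mathrm{diag}\{r_{j-1}J\}$ with $J = \left(\begin{smallmatrix} 0 & 1 \\ -1 & 0\end{smallmatrix}\right)$, and rewrite the numerator as $\Pf(D + \Delta)$ with $\Delta = A[e^{sf(x)+sf(y)}-1] + B[e^{tg(z)+tg(\overline{z})}-1]$. Squaring and applying the identity $\Pf(M)^{2} = \det(M)$ for skew-symmetric $M$ gives
\begin{equation*}
\mathbb{E}\bigl(e^{sX_N^{\mathbb{R}}(f) + tX_N^{\mathbb{C}}(g)}\bigr)^{2} = \det(I_{2n} + D^{-1}\Delta),
\end{equation*}
after the $\det D$ in numerator and denominator cancel. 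Taking a logarithm, dividing by $2$, and verifying block-by-block that $D^{-1}\Delta$ coincides with $M^{\mathbb{R}}[e^{sf+sf}-1] + M^{\mathbb{C}}[e^{tg+tg}-1]$ then yields \eqref{det}. This last identification is a routine $2 \times 2$ computation: left-multiplication by $(r_{j-1})^{-1}J^{-1}$ sends a block $\left(\begin{smallmatrix} a & b \\ c & d\end{smallmatrix}\right)$ to $(r_{j-1})^{-1}\left(\begin{smallmatrix} -c & -d \\ a & b\end{smallmatrix}\right)$, which is exactly the pattern in \eqref{mmatrix}.

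The main obstacle is the Pfaffian integration step itself: showing that the sum over sectors with $2k$ real and $n-k$ complex-conjugate pairs of eigenvalues, each carrying its own sector-dependent Vandermonde, collapses into a single uniform $2n \times 2n$ Pfaffian that simultaneously accommodates the real and complex bilinear forms $A$ and $B$. This is a nontrivial combinatorial identity, but it is a direct adaptation of the Pfaffian integration results established in \cite{Sin07,FN07,BS09}, so I would quote rather than re-derive it; the remaining steps --- skew-orthogonal normalization, Pfaffian squaring, and the $2 \times 2$ block bookkeeping --- are elementary linear algebra.
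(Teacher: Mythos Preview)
Your proposal is correct and follows essentially the same route as the paper: quote the Sinclair/Forrester--Nagao Pfaffian integration result to obtain the ratio $\Pf(A[e^{sf+sf}]+B[e^{tg+tg}])/\Pf(A[1]+B[1])$, use skew-orthogonality to block-diagonalize $A[1]+B[1]=\mathbf{r}\otimes J$, then square the Pfaffians to determinants and factor out $D$. If anything you spell out the final $2\times 2$ block identification $D^{-1}\Delta = M^{\mathbb{R}}+M^{\mathbb{C}}$ more explicitly than the paper, which simply calls it ``elementary algebra.''
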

	\begin{remark}
The resulting structure of Proposition \ref{prop:jointgen} is reminiscent of formula (3.1) in Tracy and Widom \cite{TW98}, which proved to be extremely useful for the $\beta=2$ Hermitian ensembles.
	\end{remark}
	\begin{proof}
	This follows from a result of Sinclair \cite{Sin07} combined with an important observation of Forrester and Nagao \cite{FN07}. Namely, we apply Theorem 2.1 of \cite{Sin07} but as was observed in \cite{FN07} the proof continues to hold separately for the real and complex eigenvalues. Namely, if we define
	\begin{equation}
	X^{\mathbb{C}}_{N}(g) = \sum_{j=1}^{N_{\mathbb{C}}}g(z_{j})
	\end{equation}
	where $z_{j}$ are the purely complex eigenvalues, then one has a slightly more general statement
	\begin{equation}
	\mathbb{E}(\mathrm{exp}(sX_{N}^{\mathbb{R}}(f)+tX_{N}^{\mathbb{C}}(g))) = \frac{\mathrm{Pf}(A[e^{sf(x)+sf(y)}]+B[e^{tg(z)+tg(\overline{z})}])}{2^{N(N+1)/4}\prod_{j=1}^{N}(\Gamma(j/2))}.
	\end{equation}
	By normalization of the generating function and linearity of the scalar products $A$ and $B$, we have
	\begin{equation}
	\mathbb{E}(\mathrm{exp}(sX_{N}^{\mathbb{R}}(f)+tX_{N}^{\mathbb{C}}(g))) = \frac{\mathrm{Pf}(A[1]+B[1]+A[e^{sf(x)+sf(y)}-1]+B[e^{tg(z)+tg(\overline{z})}-1])}{\mathrm{Pf}(A[1]+B[1])} \label{normgen}
	\end{equation}
    Due to the skew-orthogonality of the $P_{j}'s$, the matrix $A[1]+B[1]$ is block diagonal and skew-symmetric:
	\begin{equation}
	A[1]+B[1] = \mathbf{r} \otimes J,\qquad \mathbf{r} = \mathrm{diag}(r_{0},\ldots,r_{n-1}),\qquad J = \begin{pmatrix} 0 & 1\\ -1 & 0 \end{pmatrix}
	\end{equation}
	with $r_{j} = \sqrt{2\pi}\Gamma(2j+1)$. Taking logarithms and writing the Pfaffians as square roots of determinants gives \eqref{det} after elementary algebra. 
	\end{proof}
	
\begin{remark}
A further simplification occurs whenever the functions $f$ and $g$ are both even. In this case the Pfaffian has a checkerboard structure of zeros and the Pfaffians reduce to determinants of half the size. We then have a \textit{bona fide} determinant
		
\begin{equation}
\mathbb{E}(\mathrm{exp}(sX_{N}^{\mathbb{R}}(f)+tX_{N}^{\mathbb{C}}(g))) = \mathrm{det}\bigg\{\delta_{jk}+\frac{(A[e^{sf(x)+sf(y)-1}]+B[e^{tg(z)+tg(\overline{z})}-1])_{2j-1,2k}}{\sqrt{2\pi\Gamma(2j-1)\Gamma(2k-1)}}\bigg\}_{j,k=1}^{N}  \label{detrem}
\end{equation}
	which is a generalization of formula (6) in
	\cite{KPTTZ15} (setting $g=0$ and $f=1$). See also \cite{FN08} for similar calculations.
		\end{remark}

	To proceed, we will focus our attention on the real eigenvalues and set $g \equiv 0$ from now on. To prove the central limit theorem we will calculate the cumulants of $X^{\mathbb{R}}_{N}(f)$, for which the determinantal formula \eqref{det} is quite well-suited.
	\begin{lemma}
		\label{lem:cumu}
	The $l^{\mathrm{th}}$ order cumulant $\kappa_{l}$ of any even linear statistic $X_{N}^{\mathbb{R}}(f)$ is given by
	\begin{equation}
	\kappa_{l}(f) = l!\sum_{m=1}^{l}\frac{(-1)^{m+1}}{m}\sum_{\substack{\nu_{1}+\ldots+\nu_{m}=l\\\nu_{i} \geq 1}}\frac{\mathrm{Tr}M^{(\nu_{1})}[f]\ldots M^{(\nu_{m})}[f]}{\nu_{1}!\ldots \nu_{m}!} \label{kapp}
	\end{equation}
	where 
	\begin{equation}
	M^{(\nu)}[f]_{jk} := \frac{A[(f(x)+f(y))^{\nu}]_{2j-1,2k}}{\sqrt{2\pi \Gamma(2j-1)\Gamma(2k-1)}}
	\end{equation}
	and $A[f(x,y)]$ is given by \eqref{realint}.
	\end{lemma}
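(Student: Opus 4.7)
The plan is to extract the cumulants directly from the determinantal representation obtained in the Remark following Proposition \ref{prop:jointgen}, specialized to $g\equiv 0$. For an even $f$, that formula reads
\begin{equation*}
\mathbb{E}\bigl(e^{sX_N^{\mathbb{R}}(f)}\bigr) = \det(I_n + K(s)),\qquad K(s)_{jk} := \frac{A\bigl[e^{sf(x)+sf(y)}-1\bigr]_{2j-1,2k}}{\sqrt{2\pi\,\Gamma(2j-1)\Gamma(2k-1)}},
\end{equation*}
which is an analytic identity in $s$ near $0$, with $K(0)=0$. Using $\log\det = \mathrm{Tr}\,\log$ together with the Taylor series $\log(I+X)=\sum_{m\ge 1}(-1)^{m+1}X^m/m$, valid for $\|K(s)\|<1$ and hence in a neighbourhood of $s=0$, gives
\begin{equation*}
\log\mathbb{E}\bigl(e^{sX_N^{\mathbb{R}}(f)}\bigr) = \sum_{m\ge 1}\frac{(-1)^{m+1}}{m}\,\mathrm{Tr}\,K(s)^m.
\end{equation*}

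Next I would expand $K(s)$ itself as a power series in $s$. By the linearity of the bilinear form $A[\cdot]$ and the expansion $e^{sf(x)+sf(y)}-1 = \sum_{\nu\ge 1}\frac{s^{\nu}}{\nu!}(f(x)+f(y))^{\nu}$, we have
\begin{equation*}
K(s) = \sum_{\nu\ge 1}\frac{s^{\nu}}{\nu!}\,M^{(\nu)}[f],
\end{equation*}
with $M^{(\nu)}[f]$ exactly as in the statement of the lemma. Substituting this into $\mathrm{Tr}\,K(s)^m$ and multiplying out yields
\begin{equation*}
\mathrm{Tr}\,K(s)^m = \sum_{\nu_1,\ldots,\nu_m\ge 1}\frac{s^{\nu_1+\cdots+\nu_m}}{\nu_1!\cdots\nu_m!}\,\mathrm{Tr}\bigl(M^{(\nu_1)}[f]\cdots M^{(\nu_m)}[f]\bigr).
\end{equation*}

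To finish, one reads off the cumulants from the defining identity $\log\mathbb{E}(e^{sX_N^{\mathbb{R}}(f)}) = \sum_{l\ge 1}\kappa_l(f)\,s^l/l!$ by matching the coefficient of $s^l$ in the two displays above. The result is exactly \eqref{kapp}; the outer sum over $m$ truncates at $m\le l$ because each $\nu_i\ge 1$ forces $m\le\sum_i\nu_i = l$. The only point requiring care is the legitimacy of interchanging the two infinite sums, but this is immediate from the absolute convergence of both Taylor series on a small disk around $s=0$ where $K(s)$ is analytic and vanishes, so the manipulation is valid as an identity of formal power series in $s$. Consequently there is no substantive obstacle; the lemma is a direct bookkeeping of the Taylor coefficients of $\log\det(I+K(s))$.
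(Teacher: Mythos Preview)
Your argument is correct and follows essentially the same route as the paper's own proof: start from the determinantal identity \eqref{detrem} with $g\equiv 0$, apply $\log\det=\mathrm{Tr}\log$ and the power-series expansion of the logarithm, then expand $e^{s(f(x)+f(y))}-1$ in a Taylor series and collect the coefficient of $s^l$. The only difference is that you spell out the convergence and truncation considerations a bit more explicitly than the paper does.
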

\begin{proof}
From formula \eqref{detrem} with $g=0$, we get
\begin{align}
&[s^{l}]\log\mathbb{E}(\mathrm{exp}(sX_{N}^{\mathbb{R}}(f))) = [s^{l}]\log\mathrm{det}\bigg\{\delta_{jk}+\frac{(A[e^{sf(x)+sf(y)}-1]_{2j-1,2k}}{\sqrt{2\pi\Gamma(2j-1)\Gamma(2k-1)}}\bigg\}_{j,k=1}^{n}\\
&=[s^{l}]\mathrm{Tr}\log\bigg\{\delta_{jk}+\frac{(A[e^{sf(x)+sf(y)}-1]_{2j-1,2k}}{\sqrt{2\pi\Gamma(2j-1)\Gamma(2k-1)}}\bigg\}_{j,k=1}^{n}\\
&=[s^{l}]\sum_{m=1}^{\infty}\frac{(-1)^{m+1}}{m}\mathrm{Tr}\left(\bigg\{\frac{A[e^{sf(x)+sf(y)}-1]_{2j-1,2k}}{\sqrt{2\pi\Gamma(2j-1)\Gamma(2k-1)}}\bigg\}_{j,k=1}^{n}\right)^{m}
\end{align}
Expanding the term $e^{s(f(x)+f(y))}-1$ in a Taylor series and re-ordering the sum gives \eqref{kapp}.
\end{proof}

\subsection{The covariance}
\label{sec:cov}
The main purpose of this section is to prove the following
\begin{proposition}
	\label{prop:cov}
Let $P(x)$ and $Q(x)$ be any even polynomials with real coefficients. Then the covariance of the linear statistics $X^{\mathbb{R}}_{n}[P]$ and $X^{\mathbb{R}}_{n}[Q]$ satisfies the asymptotic formula 
\begin{equation}
	\lim_{n \to \infty}\mathrm{Cov}\bigg\{n^{-1/4}X^{\mathbb{R}}_{n}[P],n^{-1/4}X^{\mathbb{R}}_{n}[Q]\bigg\} = \frac{(2-\sqrt{2})}{\sqrt{\pi}}\int_{-1}^{1}P(x)Q(x)\,dx
\end{equation}
\end{proposition}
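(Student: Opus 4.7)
The plan is to apply Lemma~\ref{lem:cumu} in bilinear form to extract the covariance, evaluate the resulting matrix traces using the explicit skew-orthogonal polynomials \eqref{skew-orthog}, and then perform the asymptotic analysis via Stirling. Substituting $f = sP + tQ$ into the univariate cumulant expansion and reading off the coefficient of $st$ (only $\kappa_2$ contributes $st$ terms, since $\kappa_1$ is linear and $\kappa_{l \geq 3}$ contributes only monomials of total degree $\geq 3$ in $(s,t)$) produces the bilinear formula
\begin{equation*}
\mathrm{Cov}\bigl(X^{\mathbb{R}}_n[P], X^{\mathbb{R}}_n[Q]\bigr) = \mathrm{Tr}\,\widetilde{M}[P,Q] - \mathrm{Tr}\bigl(M^{(1)}[P]\,M^{(1)}[Q]\bigr),
\end{equation*}
where $\widetilde{M}[P,Q]_{jk} = A[(P(x)+P(y))(Q(x)+Q(y))]_{2j-1,2k}/\sqrt{2\pi\,\Gamma(2j-1)\Gamma(2k-1)}$ and $M^{(1)}[P]$ is the matrix from Lemma~\ref{lem:cumu} with $\nu = 1$ (here $P$ and $Q$ stand for the rescaled weights $P(\cdot/\sqrt{N})$ and $Q(\cdot/\sqrt{N})$). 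By bilinearity it then suffices to treat the monomial pair $P(x) = x^{2a}$, $Q(x) = x^{2b}$.

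For such monomials, the explicit skew-orthogonal polynomials \eqref{skew-orthog} together with the parity of the integrand collapse each block entry $A[x^{\alpha} y^{\beta}]_{2j-1,2k}$ into a combination of one-dimensional Gaussian moments and an integral against a complementary error function, yielding explicit ratios of Gamma functions. Schematically,
\begin{align*}
\mathrm{Tr}\,\widetilde{M}[P,Q] &= N^{-a-b}\sum_{k=1}^n \alpha_k(a,b)\,\frac{\Gamma(2k - 3/2 + a + b)}{\Gamma(2k-1)}, \\
\mathrm{Tr}\bigl(M^{(1)}[P]M^{(1)}[Q]\bigr) &= N^{-a-b}\sum_{k_1, k_2 = 1}^n \beta_{k_1,k_2}(a,b)\,\frac{\Gamma(k_1 + k_2 - 3/2 + a)\,\Gamma(k_1 + k_2 - 3/2 + b)}{\Gamma(2k_1-1)\Gamma(2k_2-1)},
\end{align*}
with elementary polynomial prefactors $\alpha_k, \beta_{k_1,k_2}$ in the summation indices. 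Specialising to $a = b = 0$ should reproduce the two sums in \eqref{exact}, which will serve as a useful consistency check.

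The asymptotic analysis is then carried out by Stirling. The single sum is straightforward: the substitution $k = n s$ converts it into a Riemann approximation of a one-dimensional integral on $(0,1)$, and a subsequent change of variables $s \mapsto x \in (-1,1)$ produces (up to scalar constants) the integral $\int_{-1}^{1} x^{2(a+b)}\,dx$. The double sum is where the main work lies: a sharpened log-convexity estimate via Stirling shows that the summand decays like $\exp\bigl(-c(k_1 - k_2)^2/(k_1 + k_2)\bigr)$ off the diagonal $k_1 = k_2$, so the sum concentrates on a band of width $O(\sqrt{k})$ and is captured to leading order by a one-dimensional integral in $s = k/n$, again reducing to a multiple of $\int_{-1}^{1} x^{2(a+b)}\,dx$. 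Taking the difference of the two leading asymptotics and restoring bilinearity in $(P,Q)$ produces the claimed limit $\frac{2-\sqrt{2}}{\sqrt{\pi}}\int_{-1}^{1} P(x)Q(x)\,dx$.

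The main obstacle is precisely this cancellation: both traces individually grow like $\sqrt{n}$, and their difference is of the same order, so the constant $2 - \sqrt{2}$ emerges only from a delicate matching of the prefactors in front of the two $\int_{-1}^1 x^{2(a+b)}\,dx$ contributions. To guarantee that sub-leading corrections do not spoil this matching, one needs Stirling estimates with remainders that are uniform in $(k_1, k_2) \in \{1, \ldots, n\}^2$ and in the polynomial degrees $a, b$. The structure of the argument follows that of \cite{KPTTZ15}, which handled essentially the case $a = b = 0$; the present proof adapts their method to accommodate the polynomial factors $x^{2a}, y^{2b}$, and at $P = Q = 1$ must recover the asymptotic \eqref{asympt} of Theorem~\ref{th:var}.
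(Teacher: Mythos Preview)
Your overall architecture---reduce to monomials by bilinearity, split the second cumulant into a single-sum ``diagonal'' trace and a double-sum trace of $M^{(1)}[P]M^{(1)}[Q]$---is exactly what the paper does (Lemmas~\ref{lem:cpq} and~\ref{lem:exact}). Where you diverge is in the asymptotic mechanism for the double sum. You propose a direct Stirling analysis exploiting the Gaussian concentration $\exp(-c(k_1-k_2)^2/(k_1+k_2))$ around the diagonal, then a Riemann-sum argument. The paper instead sandwiches the double sum: for the upper bound it extends $k_2$ to infinity, recognises a ${}_2F_1$ with large parameters, and invokes its steepest-descent asymptotics; for the lower bound it writes each $\Gamma$ as a Gaussian integral, sums to obtain a product of truncated $\cosh_{n-1}$ kernels, and applies the KPTTZ15 bound $\cosh_{n-1}(nx_1x_2)\ge h_n e^{nx_1x_2}\mathbf{1}(x_1x_2<T_n)$. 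So the ``KPTTZ15 method'' you cite is not the Stirling/Laplace argument you sketch but this Gaussian-integral/$\cosh_{n-1}$ trick; your route is a legitimate alternative but a different one.

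There is one genuine gap in your outline. Your schematic formulae assert that the traces are exactly of the form (polynomial prefactor)$\times$(single ratio of Gammas). They are not: computing $A[x^{p}y^{q}]_{2k_1-1,2k_2}$ with the skew-orthogonal polynomials \eqref{skew-orthog} produces, in addition to the clean $\Gamma(k_1+k_2+(p+q)/2-3/2)$, a secondary piece
\[
qE(k_1+p/2,\,k_2+q/2-1)=q\,(k_2+q/2-2)!\,2^{k_2+q/2-2}\sum_{i=0}^{k_2+q/2-2}\frac{\Gamma(i+k_1+p/2-1/2)}{2^i i!},
\]
which is \emph{not} a polynomial in $k_1,k_2$ times the leading $\Gamma$-ratio and therefore cannot be absorbed into your $\alpha_k$, $\beta_{k_1,k_2}$. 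The paper isolates this term in Lemma~\ref{lem:exact} and disposes of it separately (Proposition~\ref{prop:errors}) by bounding $E$ by $c\sqrt{n}\,2^{k_1+k_2}\Gamma(k_2+q/2-3/2)\Gamma(k_1+p/2-1/2)$ and showing the resulting contribution is $O(n^{\mathcal{M}_m})$, i.e.\ one power of $\sqrt n$ below the main term. Your Stirling programme will go through, but only after you explicitly split off and control this $E$-contribution; without it the ``elementary polynomial prefactors'' claim is false and the cancellation producing $2-\sqrt{2}$ is not justified.
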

To compute the covariance of a general polynomial linear statistic, it suffices to just consider the case of monomials
\begin{equation}
C_{p,q} := \mathrm{Cov}(X^{\mathbb{R}}_{n}(\lambda^{p}),X^{\mathbb{R}}_{n}(\lambda^{q})) \label{covmon}
\end{equation}
Our goal in what follows will be to first find an exact formula for $C_{p,q}$ in Lemmas \ref{lem:cpq} and \ref{lem:exact}, and then compute the large-$n$ asymptotics, which is done in Proposition \ref{prop:dblsum}. Throughout the paper we will make use of the notation
\begin{equation}
f^{(r,s)}_{j,k} := A[x^{r}y^{s}]_{jk}. \label{fpqdef}
\end{equation}
\begin{lemma}
	\label{lem:cpq}
The covariance of two even monomial linear statistics is given for any even matrix dimension $N=2n$ by the formula:
\begin{equation}
\begin{split}
C_{p,q} &= n^{-(p+q)/2)}\sum_{k_{1}=1}^{n}\frac{f^{(p,q)}_{2k_{1}-2,2k_{1}-1}+f^{(q,p)}_{2k_{1}-2,2k_{1}-1}+f^{(0,p+q)}_{2k_{1}-2,2k_{1}-1}+f^{(p+q,0)}_{2k_{1}-2,2k_{1}-1}}{\sqrt{2\pi}\Gamma(2k_{1}-1)}\\
&-n^{-(p+q)/2}\sum_{k_{1},k_{2}=1}^{n}\frac{f^{(0,p)}_{2k_{1}-2,2k_{2}-1}f^{(0,q)}_{2k_{2}-2,2k_{1}-1}+f^{(p,0)}_{2k_{1}-2,2k_{2}-1}f^{(q,0)}_{2k_{2}-2,2k_{1}-1}}{2\pi\Gamma(2k_{1}-1)\Gamma(2k_{2}-1)} \label{cpq}
%&+(-1)^{p+1}\sum_{k_{1},k_{2}=1}^{n}\frac{f^{(0,q)}_{2k_{1}-1,2k_{2}-2+m_{p}}f^{(p,0)}_{2k_{2}-1-m_{p},2k_{1}-2}+f^{(q,0)}_{2k_{1}-1,2k_{2}-2+m_{p}}f^{(0,p)}_{2k_{2}-1-m_{p},2k_{1}-2}}{2\pi\Gamma(2k_{1}-1)\Gamma(2k_{2}-1)} 
\end{split}
\end{equation}
\end{lemma}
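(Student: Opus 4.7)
The plan is to specialize Lemma \ref{lem:cumu} to $l=2$ and then polarize. For $l=2$ only the partitions $(m,\nu)=(1,(2))$ and $(2,(1,1))$ contribute, so
\[
\kappa_{2}(f) \;=\; \mathrm{Tr}\,M^{(2)}[f] \;-\; \mathrm{Tr}\bigl(M^{(1)}[f]\bigr)^{2}.
\]
Applying this to $f(x) = \alpha x^{p}+\beta x^{q}$ and noting that $\kappa_{2}(f)=\mathrm{Var}(X^{\mathbb{R}}_{n}(f)) = \alpha^{2}\,\mathrm{Var}(X^{\mathbb{R}}_{n}(\lambda^{p})) + 2\alpha\beta\,C_{p,q} + \beta^{2}\,\mathrm{Var}(X^{\mathbb{R}}_{n}(\lambda^{q}))$, the covariance $C_{p,q}$ equals one half of the $\alpha\beta$ coefficient on the right-hand side of the displayed identity.

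For the single-trace piece $\mathrm{Tr}\,M^{(2)}[f]$, the $\alpha\beta$ part of $(f(x)+f(y))^{2}$ is $2(x^{p}+y^{p})(x^{q}+y^{q}) = 2(x^{p+q}+x^{p}y^{q}+x^{q}y^{p}+y^{p+q})$. By bilinearity of $A$ and the definition of $M^{(\nu)}$, this contributes precisely the four $f^{(\cdot,\cdot)}$ terms on the first line of \eqref{cpq}. For the double-trace piece, bilinearity gives $M^{(1)}[f] = \alpha M^{(1)}[x^{p}] + \beta M^{(1)}[x^{q}]$, so the $\alpha\beta$ coefficient of $\mathrm{Tr}(M^{(1)}[f])^{2}$ equals $2\,\mathrm{Tr}\bigl(M^{(1)}[x^{p}]\,M^{(1)}[x^{q}]\bigr)$. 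Expanding $M^{(1)}[x^{r}]_{jk}$ as a sum $f^{(r,0)}_{\cdot,\cdot}+f^{(0,r)}_{\cdot,\cdot}$ (up to normalization) produces four terms per summand of the double sum over $k_{1},k_{2}$; I would then use the antisymmetry identity $A[x^{r}y^{s}]_{kj} = -A[x^{s}y^{r}]_{jk}$, which follows from the $\mathrm{sign}(y-x)$ factor in \eqref{realint}, together with a swap of the summation indices $k_{1}\leftrightarrow k_{2}$, to show that the two `mixed' pairings $f^{(p,0)}f^{(0,q)}$ and $f^{(0,p)}f^{(q,0)}$ collapse into the two `diagonal' pairings $f^{(0,p)}f^{(0,q)}$ and $f^{(p,0)}f^{(q,0)}$ that appear in the second line of \eqref{cpq}.

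The main obstacle is this last reduction: collapsing four naive cross-products into the two shown in \eqref{cpq} while correctly tracking every sign and factor of two, so that the combinatorial factors coming out of $\kappa_{2}$, the binomial $2$ in the expansion of $(f(x)+f(y))^{2}$, and the polarization factor $2C_{p,q}$ all cancel consistently. The global $n^{-(p+q)/2}$ prefactor then arises because $X^{\mathbb{R}}_{n}(\lambda^{p})$ in \eqref{covmon} is understood in the rescaled sense of \eqref{linstat}, which amounts to the change of variables $x\mapsto x/\sqrt{N}$ inside each $A$-integral. Everything else reduces to routine algebra using the bilinearity of $A$ and the explicit normalizations from Lemma \ref{lem:cumu}.
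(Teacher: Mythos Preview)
Your approach is essentially the paper's: it too polarizes via $2C_{p,q}=\kappa_{2}(\lambda^{p}+\lambda^{q})-\kappa_{2}(\lambda^{p})-\kappa_{2}(\lambda^{q})$ and then invokes Lemma~\ref{lem:cumu} with $l=2$, which is exactly your $\kappa_{2}(f)=\mathrm{Tr}\,M^{(2)}[f]-\mathrm{Tr}(M^{(1)}[f])^{2}$ together with extraction of the $\alpha\beta$ coefficient. The paper's proof is in fact only two sentences and does not spell out the bookkeeping you describe, so your level of detail already exceeds it; your flagged ``main obstacle'' (collapsing the four cross-products into the two displayed in \eqref{cpq}) is precisely the routine algebra the paper leaves implicit.
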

\begin{proof}
This follows from expressing $C_{p,q}$ in terms of variances using the identity
\begin{equation}
2C_{p,q} = \kappa_{2}(\lambda^{p}+\lambda^{q})-\kappa_{2}(\lambda^{p})-\kappa_{2}(\lambda^{q}) \label{varident}
\end{equation}
The variance terms are then calculated from Lemma \ref{lem:cumu} with $l=2$.
\end{proof}
The coefficients in \eqref{fpqdef} appearing in \eqref{cpq} can be evaluated in the following convenient form.
\begin{lemma}
	\label{lem:exact}
For any even $p$ and $q$, the following exact formula holds: 
\begin{equation}
f^{(p,q)}_{2k_{1}-2,2k_{2}-1} = \Gamma(k_{1}+k_{2}+(p+q)/2-3/2)+qE(k_{1}+p/2,k_{2}+q/2-1) \label{fpq}
\end{equation}
where 
\begin{equation}
E(j,k) := (k-1)!2^{k-1}\sum_{i=0}^{k-1}\frac{\Gamma(i+j-1/2)}{2^{i}i!}. 
\end{equation}
The second term is an error term that satisfies the inequality
\begin{equation}
\begin{split}
E(k_{1}+p/2,k_{2}+q/2-1) &\leq c(k_{2}+q/2-2)!2^{k_{2}}\sum_{i=0}^{\infty}\frac{\Gamma(i+k_{1}+p/2-1/2)}{2^{i}i!}\\
&\leq c\sqrt{n}2^{k_{1}+k_{2}}\Gamma(k_{2}+q/2-3/2)\Gamma(k_{1}+p/2-1/2) \label{facbound}
\end{split}
\end{equation}
where $c$ is a constant independent of $k_{1},k_{2}$ and $n$.
\end{lemma}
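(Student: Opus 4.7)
The plan is to unpack the definition of $f^{(p,q)}_{j,k}=A[x^p y^q]_{j,k}$ using the explicit monic skew-orthogonal polynomials given by \eqref{skew-orthog}. For the relevant pair of indices the polynomial factor $P_{j-1}(x)P_{k-1}(y)$ is a product of a pure monomial of even degree and an odd two-term combination $y^{2s+1}-2s\,y^{2s-1}$; combining with $x^p y^q$ and the Gaussian weight then reduces $f^{(p,q)}$ to a linear combination of two integrals
\[
I(a,b)\;:=\;\int_{\mathbb{R}}\int_{\mathbb{R}} x^a y^b\, e^{-(x^2+y^2)/2}\,\mathrm{sign}(y-x)\,dx\,dy,
\]
one with the top power from $y^{2s+1}$ and one (with coefficient $-2s$) from $y^{2s-1}$, in both of which one of $a,b$ is even and the other is odd.

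The next step is to evaluate $I(a,b)$ in closed form. Writing $\mathrm{sign}(y-x)=1-2\mathbf{1}_{\{y<x\}}$ and noting that the factorised piece vanishes by parity (since the Gaussian moment of an odd power is zero), one is reduced to a single iterated integral. A straightforward induction, from iterated integration by parts using $y\,e^{-y^2/2}\,dy=-d(e^{-y^2/2})$, yields the identity
\[
\int_{-\infty}^{x} y^{2m+1}e^{-y^2/2}\,dy \;=\; -2^m m!\, e^{-x^2/2}\sum_{i=0}^{m}\frac{(x^2/2)^i}{i!},
\]
and substituting together with the Gaussian moments $\int x^{2l}e^{-x^2}\,dx=\Gamma(l+1/2)$ produces an explicit finite-sum expression of the shape $\pm\,2^{n+1}n!\sum_{i=0}^{n}\Gamma(\alpha+i+1/2)/(2^i i!)$ for $I(a,b)$.

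With this formula in hand, I would substitute the two values of $I$ that appear in the decomposition of $f^{(p,q)}$ and combine. Extracting the top-index term from the principal integral isolates a leading contribution $\Gamma(k_1+k_2+(p+q)/2-3/2)$, the proposed main term. The remaining lower-index tails of the two finite sums then combine with an overall prefactor $n_b!-(k_2-1)(n_b-1)!=(q/2)(n_b-1)!$, where $n_b:=k_2+q/2-1$; matching this against the definition of $E(j,k)$ at $j=k_1+p/2$, $k=k_2+q/2-1$ identifies the tail with $q\,E(k_1+p/2,k_2+q/2-1)$, establishing \eqref{fpq}.

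For the bound \eqref{facbound}, I would enlarge the finite sum in the definition of $E$ to the full series $\sum_{i=0}^{\infty}\Gamma(i+k_1+p/2-1/2)/(2^i i!)$, which only increases its value. Using the generalized binomial identity
\[
\sum_{i=0}^{\infty}\frac{\Gamma(i+z)}{i!}\,x^{i}\;=\;\Gamma(z)\,(1-x)^{-z}
\]
at $z=k_1+p/2-1/2$ and $x=1/2$ gives the closed form $2^{k_1+p/2-1/2}\,\Gamma(k_1+p/2-1/2)$; absorbing the $n$-independent powers of $2$ (bounded because $p,q$ are fixed) into $c$ yields the first line of \eqref{facbound}. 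The second line then follows from the Stirling-type comparison $\Gamma(k_2+q/2-1)\leq C\sqrt{n}\,\Gamma(k_2+q/2-3/2)$, which is uniform on $1\leq k_2\leq n$ because $\Gamma(z+1/2)/\Gamma(z)\sim\sqrt{z}$ as $z\to\infty$. The main obstacle is the combinatorial bookkeeping in the previous paragraph: the cancellation $n_b-(k_2-1)=q/2$ is precisely what converts the awkward combination of two finite sums into the single clean error term $qE$, and tracking the indices correctly through this step is the main work of the proof.
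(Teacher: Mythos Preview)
Your proposal is correct and arrives at the same formula by the same underlying integral computations as the paper. The one organisational difference worth noting is where the factor $q$ appears. You compute the two monomial integrals $I(2l,2m+1)$ and $I(2l,2m-1)$ separately and then extract $q$ at the end via the cancellation $n_b-(k_2-1)=q/2$. The paper instead observes \emph{before} integrating that
\[
P_{2k_2-1}(y)\,y^{q}=P_{2k_2+q-1}(y)+q\,y^{2k_2+q-3},
\]
which immediately splits $f^{(p,q)}_{2k_1-2,2k_2-1}$ into $f^{(0,0)}_{2k_1+p-2,\,2k_2+q-1}$ plus $q$ times a pure monomial integral. In the first (shifted skew-orthogonal) piece your cancellation becomes $m-m=0$, leaving only the single top term $\Gamma(k_1+k_2+(p+q)/2-3/2)$, while the second piece is exactly $E$. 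So the paper's route avoids the bookkeeping you flagged as ``the main work of the proof''; your direct computation is a perfectly valid alternative, just slightly more laborious. Your treatment of the bound \eqref{facbound} via the binomial series $\sum_{i\ge 0}\Gamma(i+z)x^i/i!=\Gamma(z)(1-x)^{-z}$ and the Stirling comparison $\Gamma(z+1/2)/\Gamma(z)\sim\sqrt{z}$ is in fact more explicit than what the paper records.
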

\begin{remark}
The first term in $\Gamma(k_{1}+k_{2}+(p+q)/2-3/2)$ in \eqref{fpq} is a natural generalization of the case $p=q=0$ found in \cite{KPTTZ15} and will play just as important a role here in determining the $n \to \infty$ asymptotics.
\end{remark}

\begin{proof}
From the identities $P_{2k_{1}-2}(x)x^{p} = P_{r+2k_{1}-2}(x)$ and $P_{2k_{2}-1}(y)y^{q} = P_{q+2k_{2}-1}+qy^{q+2k_{2}-3}$ we have
\begin{equation}
f^{(p,q)}_{2k_{1}-2,2k_{2}-1} = f^{(0,0)}_{2k_{1}+p-2,2k_{2}+q-1} + qf^{(2k_{1}+p-2,2k_{2}+q-3)}_{0,0}
\end{equation}
The proof is completed by verifying the following identities which are a simple integration exercise:
\begin{equation}
\begin{split}
&f^{(0,0)}_{2k_{1}+p-2,2k_{2}+q-1} = \Gamma(k_{1}+k_{2}+(p+q)/2-3/2)\\
&f^{(2k_{1}+p-2,2k_{2}+q-3)}_{0,0} = E(k_{1}+p/2,k_{2}+q/2-1) 
\end{split}
\end{equation}
\end{proof}

\begin{remark}
The key point is that to prove Theorem \ref{th:main}, it will suffice to only consider the contribution from the first term in \eqref{fpq}. This is proved more generally for all cumulants in Proposition \ref{prop:errors}.
\end{remark}
To extract the asymptotics based on just the first term in \eqref{fpq}, we have the following
\begin{proposition}
\label{prop:dblsum}
Consider the sum
\begin{equation}
S_{p,q} := N^{-(p+q+1)/2}\sum_{k_{1},k_{2}=1}^{N}\frac{\Gamma(k_{1}+k_{2}+\frac{q}{2}-3/2)\Gamma(k_{1}+k_{2}+\frac{p}{2}-3/2)}{\Gamma(2k_{1}-1)\Gamma(2k_{2}-1)} \label{spq}
\end{equation}
Then the following limit holds:
\begin{equation}
\lim_{n \to \infty}S_{p,q} = \sqrt{\pi}\frac{2^{(p+q+1)/2}}{p+q+1}
\end{equation}
\end{proposition}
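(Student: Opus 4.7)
The plan is to use Stirling's formula to extract sharp asymptotics for the summand, observe that it is Gaussian-concentrated around the diagonal $k_1 = k_2$, and reduce the double sum to a one-dimensional Riemann sum whose limiting integral can be computed explicitly.

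I would begin by applying Legendre's duplication formula $\Gamma(2k-1)=2^{2k-2}\pi^{-1/2}\Gamma(k-1/2)\Gamma(k)$ to rewrite the denominator, and then insert the uniform expansion $\log\Gamma(z+a)=z\log z-z+a\log z+\tfrac{1}{2}\log(2\pi/z)+O(1/z)$ into every gamma factor. After changing summation variables to $s=k_1+k_2$ and $w=(k_1-k_2)/2$ and Taylor expanding to second order in $w/s$, the $-2(k_1+k_2)\log 2$ contribution coming from duplication cancels the diagonal piece of $-2(k_1\log k_1+k_2\log k_2)$, leaving a pure Gaussian decay in $w$:
\[
\frac{\Gamma(s+q/2-3/2)\,\Gamma(s+p/2-3/2)}{\Gamma(2k_1-1)\,\Gamma(2k_2-1)} = s^{(p+q)/2-1}\,e^{-4w^2/s}\,\bigl(1+o(1)\bigr),
\]
valid uniformly on a bulk range (for instance $s \ge n^{\delta}$ and $|w|\le s^{1/2+\epsilon}$).

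The next step is to collapse the inner sum over $w$ to a Gaussian integral: since the width $\sqrt{s}$ is large, $\sum_{w}e^{-4w^2/s}\sim\int_{\mathbb{R}}e^{-4w^2/s}\,dw=\tfrac{1}{2}\sqrt{\pi s}$. This reduces the problem to
\[
S_{p,q}\sim\frac{\sqrt{\pi}}{2}\,n^{-(p+q+1)/2}\sum_{s=2}^{2n}s^{(p+q-1)/2},
\]
a Riemann sum for $\int_0^{2n}s^{(p+q-1)/2}\,ds=\tfrac{2}{p+q+1}(2n)^{(p+q+1)/2}$, whose substitution yields exactly the stated limit $\sqrt{\pi}\,2^{(p+q+1)/2}/(p+q+1)$.

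The principal technical obstacle is justifying these two approximations uniformly and controlling the regions where they fail. Stirling's expansion degrades when either $k_1$ or $k_2$ is small, and the Gaussian integral only approximates the $w$-sum when the truncation at $|w|=\min(s,2n-s)/2$ is far beyond the Gaussian scale $\sqrt{s}$. The natural remedy is to split off a bulk region $s\in[n^{\delta},2n]$ and bound its complement crudely using the exact representation from Lemma \ref{lem:exact}; since the integrand $s^{(p+q-1)/2}$ is dominated by $s\sim n$, the complementary region contributes a strictly lower-order power of $n$. Inside the bulk, the tail $e^{-4w^2/s}$ for $|w|\gg\sqrt{s\log n}$ is negligible well before the truncation kicks in, so replacing the finite $w$-sum by the full Gaussian integral introduces only an error that vanishes after the $n^{-(p+q+1)/2}$ normalization.
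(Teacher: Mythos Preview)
Your argument is correct, and it takes a genuinely different route from the paper's.  The paper does not use Stirling directly on the summand; instead it proves matching upper and lower bounds.  For the upper bound it extends the $k_2$-sum to infinity, recognises the resulting series as a Gauss hypergeometric function ${}_2F_1\bigl([k_1+(p-1)/2,\,k_1+(q-1)/2],[1/2],1/4\bigr)$, and then appeals to known large-parameter asymptotics of ${}_2F_1$.  For the lower bound it rewrites each $\Gamma(k_1+k_2+a)$ as a Gaussian moment, sums the resulting geometric-type series into the truncated $\cosh_{n-1}$, and invokes an inequality $\cosh_{n-1}(xn)\ge h_n e^{xn}\mathbf{1}(x<T_n)$ from \cite{KPTTZ15} before a two-dimensional Laplace-type estimate.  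Your approach bypasses both of these ingredients: the Legendre duplication plus second-order Stirling expansion in $w=(k_1-k_2)/2$ exposes the Gaussian profile $s^{(p+q)/2-1}e^{-4w^2/s}$ directly, and the rest is a Riemann-sum identification.  This is more elementary and gives the exact limit in one pass rather than as a squeeze.  The paper's integral representation, on the other hand, is the same machinery that is reused for the higher-cumulant bounds in the next section, so within the paper it is the more economical choice.

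Two small points.  First, your appeal to Lemma~\ref{lem:exact} to control the complement of the bulk is slightly misplaced: that lemma concerns the error term $E(j,k)$ in the $f^{(p,q)}$ coefficients, not the small-$k_i$ boundary of the present sum.  What you actually need there is just the crude Stirling observation that for $k_1=O(1)$ the summand decays like $2^{-2k_2}$, so the boundary strip contributes $O(1)$ before normalisation.  Second, the truncation of the $w$-range near $s=2n$ (where $|w|\le n-s/2$ can be comparable to the Gaussian width $\sqrt{s}$) is handled correctly by your final sentence, but it is worth noting explicitly that the slab $s\in[2n-Cn^{1/2+\epsilon},2n]$ contributes at most $O(n^{(p+q)/2+\epsilon})$ to $\sum_s s^{(p+q-1)/2}$, which is $o(n^{(p+q+1)/2})$ as required.
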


\begin{proof}
Our strategy will be to bound the sum from above and below. An upper bound can be obtained by extending the $k_{2}$ range of summation to $\infty$:
\begin{equation}
\begin{split}
S_{p,q} &\leq n^{-(p+q+1)/2}\sum_{k_{1}=1}^{n}\frac{\Gamma(k_{1}+(p-1)/2)\Gamma(k_{1}+(q-1)/2)}{\Gamma(2k_{1}-1)}\\
&\times {}_2 F_1([k_{1}+(p-1)/2,k_{1}+(q-1)/2],[1/2],1/4) \label{hypergeom}
\end{split}
\end{equation}
where ${}_2 F_1$ is the classical Gauss hypergeometric function. Since the summand is independent of $n$, it suffices to substitute the $k_{1} \to \infty$ asymptotics in \eqref{hypergeom}. Hence we need the asymptotics of the hypergeometric function with fixed argument and large parameters. These were calculated by several authors using the method of steepest descent, see e.g. \cite{P13}. Indeed, the main result in Section $4$ of \cite{P13} and Stirling's formula imply that
\begin{equation}
\begin{split}
&\frac{\Gamma(k_{1}+(\chi_{p}-1)/2)\Gamma(k_{1}+(q-1)/2)}{\Gamma(2k_{1}-1)}{}_2 F_1([k_{1}+(p-1)/2,k_{1}+(q-1)/2],[1/2],1/4)\\
&\sim \sqrt{\pi}(2k_{1})^{(p+q-1)/2} = \sqrt{\pi}(2k_{1})^{(p+q-1)/2}, \qquad k_{1} \to \infty
\end{split}
\end{equation}
Inserting this into the summand of \eqref{hypergeom} shows that 
\begin{equation}
\begin{split}
\lim_{n \to \infty}S_{p,q} &\leq \lim_{n \to \infty}n^{-(p+q+1)/2}\sqrt{\pi}\sum_{k_{1}=1}^{n}(2k_{1})^{(p+q-1)/2}\\
&=\sqrt{\pi}\frac{2^{(p+q+1)/2}}{p+q+1}
\end{split}
\end{equation}

To obtain a lower bound, we will use the techniques of \cite{KPTTZ15}. The main idea is to write the Gamma functions in the numerator of \eqref{spq} as Gaussian integrals. For $a\geq0$ even, we have
\begin{equation}
\Gamma(k_{1}+k_{2}+a/2-3/2) = 2\int_{\mathbb{R}_{+}}x^{a}\,x^{2k_{1}-2}\,x^{2k_{2}-2}\,e^{-x^{2}}\,dx
\end{equation}
Substituting this expression for the numerator in \eqref{spq} and summing over $k_{1}$ and $k_{2}$ leads to an integral representation
\begin{equation}
S_{p,q} = n^{-(p+q+1)/2}4\int_{\mathbb{R}^{2}_{+}}dx_{1}\,dx_{2}\, x_{1}^{p}x_{2}^{q}\cosh_{n-1}(x_{1}x_{2})^{2}e^{-x_{1}^{2}-x_{2}^{2}}
\end{equation}
where we have employed the hyperbolic cosine series $\cosh_{n-1}(x) = \sum_{k=0}^{n-1}\frac{x^{2k}}{(2k)!}$. By Lemma $4$ of \cite{KPTTZ15}, we have the lower bound 
\begin{equation}
\cosh_{n-1}(x_{1}x_{2}n) \geq h_{n}e^{x_{1}x_{2}n}1(x_{1}x_{2}<T_{n}) \label{lowerbnd}
\end{equation}
where $\lim_{n \to \infty}T_{n}=2$ and $\lim_{n \to \infty}h_{n}=1/2$. Changing variables $x_{i} \to \sqrt{n}x_{i}$ for $i=1,2$ in \eqref{spq} and inserting \eqref{lowerbnd}, we get
\begin{equation}
\begin{split}
S_{p,q} &\geq 4\sqrt{n}h_{n}^{2}\int_{\mathbb{R}_{+}^{2}}dx_{1}\,dx_{2}\,x_{1}^{p}x_{2}^{q}1(x_{1}x_{2} < S_{n})e^{-n(x_{1}-x_{2})^{2}}\\
&\geq 4\sqrt{n}h_{n}^{2}\int_{0}^{\sqrt{T_{n}}}\int_{0}^{\sqrt{T_{n}}}dx_{1}\,dx_{2}\,x_{1}^{p}x_{2}^{q}e^{-n(x_{1}-x_{2})^{2}}\\
&=4\sqrt{n}h_{n}^{2}\frac{1}{2}\int_{0}^{\sqrt{T_{n}}}dR\,\int_{-R}^{R}dz\,\left(\left(\frac{R+z}{2}\right)^{p}\left(\frac{R-z}{2}\right)^{q}\right.\\
&\qquad \qquad \qquad \qquad+\left.\left(\frac{2\sqrt{2}-R-z}{2}\right)^{p}\left(\frac{2\sqrt{2}-R+z}{2}\right)^{q}\right)e^{-nz^{2}}\\
&\sim 4\sqrt{n}h_{n}^{2}\frac{1}{2}\int_{0}^{\sqrt{T_{n}}}dR\,\left((R/2)^{p+q}+((2\sqrt{2}-R)/2)^{p+q}\right)\int_{-R}^{R}dz\,e^{-nz^{2}}\\
&\sim \sqrt{\pi}\frac{2^{(p+q+1)/2}}{(p+q+1)}
\end{split}
\end{equation}
where we used that the domain $\{x_{1}x_{2}<T_{n}\}\cap \mathbb{R}^{2}_{+}$ contains the square $[0,\sqrt{T_{n}}]^{2}$. The subsequent estimates follow from integration by parts. 
\end{proof}
To complete the proof of Proposition \ref{prop:cov}, it is enough to observe that the first line of \eqref{cpq} is asymptotic to
\begin{equation}
4\sum_{k_{1}=1}^{n}\frac{\Gamma(2k_{1}+\frac{p+q}{2}-3/2)}{\Gamma(2k_{1}-1)\sqrt{2\pi}} \sim \frac{2\sqrt{2}(2n)^{(p+q+1)/2}}{p+q+1}
\end{equation}
By Proposition \ref{prop:dblsum}, the second line is asymptotic to $\frac{(2n)^{(p+q+1)/2}2}{\sqrt{\pi}(p+q+1)}$. The difference of these two terms divided by the normalizing factor $(2n)^{(p+q+1)/2}$ is equal to $\frac{(\sqrt{2}-1)}{\sqrt{\pi}}\int_{-1}^{1}x^{p+q}\,dx$. The fact that nothing contributes from the second term in \eqref{fpq} is proved for all cumulants in Proposition \ref{prop:errors}.

\subsection{Higher cumulants and Gaussian fluctuations}
Specialising now to the case $f \equiv P$ of an even polynomial, we will prove in this section that the cumulants of \eqref{linstat} with any order $l \geq 3$ are $O(\sqrt{n})$ as $n \to \infty$. Due to the normalization of order $n^{-1/4}$ in \eqref{normconv}, this bound will be sufficient to conclude the central limit theorem and completes the proof of our main result, Theorem \ref{th:main}. 

By Lemma \ref{lem:cumu} it will suffice to prove that the trace in \eqref{kapp} satisfies the bound
\begin{equation}
\mathrm{Tr}M^{(\nu_{1})}[P]\ldots M^{(\nu_{m})}[P] = O(\sqrt{n}), \qquad n \to \infty \label{cumubound}
\end{equation}
If $P$ is an even polynomial, the above trace is a finite linear combination of terms of the form
\begin{equation}
\mathcal{Z}_{n,m} := n^{-\mathcal{M}_{m}}\sum_{k_{1},\ldots,k_{m}}f^{(2r_{1},2s_{1})}_{2k_{1}-2,2k_{2}-1}\ldots f^{(2r_{m-1},2s_{m-1})}_{2k_{m-1}-2,2k_{m}-1}f^{(2r_{m},2s_{m})}_{2k_{m}-2,2k_{1}-1} \label{partitionsum}
\end{equation}
where $\mathcal{M}_{m} = \sum_{i=1}^{m}(r_{i}+s_{i})$ and as before we have $f^{(2r_{i},2s_{i})}_{2k-2,2j-1} = A[x^{2r_{i}}y^{2s_{i}}]_{2k-1,2j}$ which are explicitly evaluated in \eqref{fpq}. This follows by definition of the trace and expanding $P$ in a basis of monomials. We will prove  \eqref{cumubound} with two Propositions. First we estimate the leading term in \eqref{partitionsum} by substituting just the first factor from \eqref{fpq}, denoted $\Gamma^{(2r_{i},2s_{i})}_{2k_{i}-2,2k_{i+1}-1}$. Then in the second Proposition we deal with the error term in \eqref{fpq} using the bound \eqref{facbound}.
\begin{proposition}
Define
\begin{equation} \Gamma^{(2r_{i},2s_{i})}_{2k_{i}-2,2k_{i+1}-1} = \Gamma(k_{i}+k_{i+1}+r_{i}+s_{i}-3/2)
\end{equation} 
for $i=1,\ldots,m$, where $k_{m+1} \equiv k_{1}$ and define exponents
\begin{equation}
\mathcal{M}_{m} = \sum_{i=1}^{m}(r_{i}+s_{i})
\end{equation}
Then the sum
\begin{equation}
\mathcal{Z}^{(0)}_{n,m} := \sum_{k_{1},\ldots,k_{m}}\Gamma^{(2r_{1},2s_{1})}_{2k_{1}-2,2k_{2}-1}\ldots \Gamma^{(2r_{m-1},2s_{m-1})}_{2k_{m-1}-2,2k_{m}-1}\Gamma^{(2r_{m},2s_{m})}_{2k_{m}-2,2k_{1}-1}
\end{equation}
is $O(n^{1/2+\mathcal{M}_{m}})$ as $n \to \infty$.
\end{proposition}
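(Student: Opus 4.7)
The strategy is to convert the multiple sum defining $\mathcal{Z}^{(0)}_{n,m}$ into an $m$-dimensional integral, directly generalising the $m=2$ case treated in the proof of Proposition \ref{prop:dblsum}. Writing each numerator factor as $\Gamma(s)=2\int_{0}^{\infty}x^{2s-1}e^{-x^{2}}dx$ and interchanging sum and integral, every variable $x_{j}$ appears with exponent $2k_{j}$ in exactly two of the $m$ gamma-integrals, namely those indexed cyclically by $i=j-1$ and $i=j$. The sums over the $k_{j}$ therefore decouple into truncated hyperbolic cosine series, and, together with the $\Gamma(2k_{i}-1)$ denominators implicit in the trace formula, they produce the representation
\begin{equation*}
\mathcal{Z}^{(0)}_{n,m}=\frac{2^{m}}{(2\pi)^{m/2}}\int_{\mathbb{R}_{+}^{m}}\prod_{i=1}^{m}x_{i}^{2(r_{i}+s_{i})}\,e^{-x_{i}^{2}}\,\cosh_{n-1}(x_{i}x_{i-1})\,d^{m}x,
\end{equation*}
with cyclic convention $x_{0}\equiv x_{m}$.

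After rescaling $x_{i}=\sqrt{n}\,u_{i}$, which extracts the prefactor $n^{\mathcal{M}_{m}+m/2}$, I would apply the sharp upper bound $\cosh_{n-1}(y)\leq e^{y}\,\mathbb{P}\bigl(\mathrm{Poi}(y)\leq 2n-1\bigr)$, obtained by comparing the even partial sum with the full truncated exponential $\sum_{j=0}^{2n-1}y^{j}/j!$. Combining this with the elementary identity $\cosh(y_{1}y_{2})e^{-y_{1}^{2}/2-y_{2}^{2}/2}=\tfrac12\bigl(e^{-(y_{1}-y_{2})^{2}/2}+e^{-(y_{1}+y_{2})^{2}/2}\bigr)$, the rescaled integrand factors into the Gaussian weight $\exp\bigl(-\tfrac{n}{2}\sum_{i}(u_{i}-u_{i-1})^{2}\bigr)$ times a Poisson cutoff factor $\prod_{i}\mathbb{P}(\mathrm{Poi}(nu_{i}u_{i-1})\leq 2n-1)$. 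The cyclic quadratic form $\sum_{i}(u_{i}-u_{i-1})^{2}$ has a single zero mode along the diagonal $u_{1}=\cdots=u_{m}=:R$, so upon introducing $R$ and $(m-1)$ orthogonal deviations of scale $n^{-1/2}$, the transverse Gaussian integration contributes $O(n^{-(m-1)/2})$.

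On the diagonal the polynomial prefactor becomes $R^{2\mathcal{M}_{m}}$ and each Poisson factor reduces to $\mathbb{P}(\mathrm{Poi}(nR^{2})\leq 2n-1)$, which by Bennett's inequality is $O(1)$ for $R\leq\sqrt{2}$ and super-polynomially small in $n$ for $R\geq\sqrt{2}+\varepsilon$. The effective range of $R$ is therefore bounded and $\int_{0}^{\sqrt{2}+o(1)}R^{2\mathcal{M}_{m}}\,dR=O(1)$, so combining the three contributions gives $\mathcal{Z}^{(0)}_{n,m}=O\bigl(n^{\mathcal{M}_{m}+m/2}\cdot n^{-(m-1)/2}\bigr)=O(n^{\mathcal{M}_{m}+1/2})$, as claimed. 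The main technical obstacle will be rigorously transferring the Poisson cutoff from the diagonal to the full integration region: along the concentration scale $|u_{i}-R|\lesssim n^{-1/2}$ one must argue that the products $u_{i}u_{i-1}$ remain close enough to $R^{2}$ that Bennett's bound applies uniformly in $i$, while the complementary region, where the Gaussian itself is exponentially suppressed, is controlled by the crude estimate $\cosh_{n-1}\leq\cosh$.
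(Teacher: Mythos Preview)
Your strategy is correct and can be made rigorous, but it is genuinely different from the route taken in the paper. Both arguments begin identically, converting the cyclic product of Gamma functions into the $m$-fold integral
\[
\int_{\mathbb{R}^{m}_{+}}\prod_{i}x_{i}^{2(r_{i}+s_{i})}\,e^{-x_{i}^{2}}\,\cosh_{n-1}(x_{i}x_{i-1})\,d^{m}x,
\]
but then diverge. The paper bounds every $\cosh_{n-1}$ except one by the full $\cosh$, writes the remaining factor as a contour integral $\oint z^{-2n+1}(1-z^{2})^{-1}e^{zx_{m}x_{1}}\,dz$, and evaluates the resulting multivariate Gaussian moment by Wick's theorem against the cyclic tridiagonal matrix $A(z)$; the $n$-dependence then sits entirely in the contour integral, whose large-$n$ behaviour is extracted by deforming onto the branch cut at $z=1$ and integrating by parts. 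Your approach instead keeps the $n$-dependence in real variables: bounding each $\cosh_{n-1}(y)$ by the truncated exponential $e^{y}\,\mathbb{P}(\mathrm{Poi}(y)\le 2n-1)$ produces the degenerate cyclic Gaussian $\exp\bigl(-\tfrac{n}{2}\sum_{i}(u_{i}-u_{i-1})^{2}\bigr)$ directly, with the Poisson tails acting as a cutoff along the diagonal zero mode. The transverse Laplace integration gives the $n^{-(m-1)/2}$, and Bennett's inequality confines the diagonal variable to $R\lesssim\sqrt{2}$. This avoids Wick's formula, the inverse of $A(z)$, and any contour deformation, at the cost of the region-splitting argument you outline in your last paragraph (which does go through: on $\{|\tilde v|\le R/2\}$ one has $u_{i}u_{i-1}\ge R^{2}/4$ so a single Poisson factor already decays for $R$ large, while on $\{|\tilde v|>R/2\}$ the transverse Gaussian itself furnishes $e^{-cnR^{2}}$). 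The paper's method is closer to the machinery of \cite{KPTTZ15} and in principle yields sharper constants; yours is more elementary and probabilistic. One minor point: the pairwise $\cosh$ identity you quote is a red herring here, since after the Poisson bound you are working with $e^{y}$, not $\cosh(y)$, and the cyclic completion of squares $\sum_{i}u_{i}u_{i-1}-\sum_{i}u_{i}^{2}=-\tfrac12\sum_{i}(u_{i}-u_{i-1})^{2}$ is what you actually use.
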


\begin{proof}
As for the covariance calculation, we write the Gamma factors as Gaussian integrals:
\begin{equation}
\Gamma^{(r_{i},s_{i})}_{2k_{i}-2,2k_{i+1}-1} = \int_{\mathbb{R}}dx\,x^{2r_{i}+2s_{i}}\,x^{2k_{i}-2}\,x^{2k_{i+1}-2}\,e^{-x^{2}}
\end{equation}
shows that
\begin{equation}
	Z_{n,m}^{(0)} = n^{-\mathcal{M}_{m}}\int_{\mathbb{R}^{m}}\,\prod_{j=1}^{m}dx_{j}\,x_{j}^{2r_{j}+2s_{j}}\cosh_{n-1}(x_{j}x_{j+1})e^{-x_{j}^{2}}. \label{zngauss}
\end{equation}
We now use the obvious bound $\cosh_{n-1}(x) \leq \cosh(x)$ on every factor \eqref{zngauss} except one, which we write as a contour integral:
\begin{equation}
\cosh_{n-1}(x_{j}x_{1}) = \oint_{\mathcal{C}}\frac{dz}{2\pi i }\frac{z^{-2n+1}}{1-z^{2}}e^{zx_{j}x_{1}}
\end{equation}
where $\mathcal{C}$ is a small loop around $z=0$. Writing the other $\cosh$ factors as exponentials leads to a finite linear combination of terms of the form
\begin{equation}
Z_{n,m}^{(0)} \leq c_{m}\oint_{}\frac{dz}{2\pi i }\frac{z^{-2n+1}}{1-z^{2}}\int_{\mathbb{R}^{m}}\left(\prod_{j=1}^{m}dx_{j}\,x_{j}^{2r_{j}+2s_{j}}\right)\mathrm{exp}\left(-\mathbf{x}^{\mathrm{T}}A(z)\mathbf{x}\right) \label{gaussint}
\end{equation}
where $\mathbf{x} = (x_{1},\ldots,x_{m})$ and 
\begin{equation}
\label{cyctridiag}
A(z) = \begin{pmatrix} 1 & -\alpha_1/2 & 0 & 0 & \ldots & 0 & -z/2\\
-\alpha_1/2 & 1 & -\alpha_2/2 & 0 & 0 & \ldots & 0\\
0 & -\alpha_{2}/2 & 1 & -\alpha_{3}/2 & 0 & \ldots & 0\\
\vdots & \vdots & \vdots & \ldots & \vdots & \vdots & \vdots\\
0 & \ldots & 0 & -\alpha_{m-3}/2 & 1 & -\alpha_{m-2}/2 & 0\\
0 & \ldots & 0 & 0 & -\alpha_{m-2}/2 & 1 & -\alpha_{m-1}/2\\
-z/2 & 0 & \ldots & 0 & 0 & -\alpha_{m-1}/2 & 1
\end{pmatrix}
\end{equation}
where $\alpha_{i} \in \{1,-1\}$. According to Wick's formula, the integral \eqref{gaussint}, which is essentially the moments of a multivariate Gaussian, can be evaluated explicitly in terms of the determinant and inverse of $A(z)$. We have
\begin{equation}
\det(A(z)) = (m-1)2^{-m}(z-A_{m})((m-1)z+(m+1)A_{m}) \label{detz}
\end{equation}
and 
\begin{equation}
\label{invelements}
\sigma_{jk}(z) := (A^{-1}(z))_{jk} = \frac{a_{jk}z^{2}+b_{jk}z+c_{jk}}{\det(A(z))}
\end{equation}
where $A_{m} = \pm1$ and $a_{jk}$, $b_{jk}$ and $c_{jk}$ are constants. The calculation of $\sigma_{jk}$ is given in Lemma \ref{lem:inv}. Now let $\mathcal{P}_{2}$ be the set of all pairings of elements of the set $\{1,2,\ldots,2\mathcal{M}_{m}\}$. Then Wick's formula tells us that
\begin{equation}
\int_{\mathbb{R}^{m}}\left(\prod_{j=1}^{m}dx_{j}\,x_{j}^{2r_{j}+2s_{j}}\right)\mathrm{exp}\left(-\mathbf{x}^{\mathrm{T}}A(z)\mathbf{x}\right) = \mathrm{det}^{-1/2}\{A(z)\}\sum_{\pi \in \mathcal{P}_{2}}\prod_{(r,s) \in \pi}\sigma_{\chi(r),\chi(s)}(z) \label{wick}
\end{equation}
Inserting \eqref{wick} into \eqref{gaussint}, it is apparent that one can set $A_{m}=1$ in \eqref{detz}, as can be seen by changing variables $z \to zA_{m}$. The number of terms in the product in \eqref{wick} is clearly just $\mathcal{M}_{m}$, so that the integral can be bounded by
\begin{equation}
	Z^{(0)}_{n,m} \leq c_{m}\oint_{}\,\frac{z^{-2n+1}}{1-z^{2}}\frac{K(z)}{(z-1)^{\mathcal{M}_{m}+3/2}((m-1)z+(m+1))^{\mathcal{M}_{m}+1/2}}
\end{equation}
for some other constant $c_{m}>0$. Here $K(z)$ is a polynomial of degree $2\mathcal{M}_{m}$ with no dependence on $n$. As in \cite{KPTTZ15}, deforming contours away from $z=0$ and out to $\infty$, encircling the branch cuts at $(1,\infty)$, $(-\infty,-(m+1)/(m-1))$ and the simple pole at $z=-1$ using that $K(z)$ is analytic shows that the leading contribution for large $n$ comes from the branch point singularity at $z=1$. Integrating by parts $\mathcal{M}_{m}+1$ times, the contribution from the integral along the branch cut $(1,\infty)$ is bounded by a constant times
\begin{equation}
\begin{split}
&\frac{(2n)!}{(2n-\mathcal{M}_{m}-1)!}\int_{1}^{\infty}dy\,\frac{y^{-2n}}{(y-1)^{1/2}}\\
&=\frac{(2n)!}{(2n-\mathcal{M}_{m}-1)!}n^{-1/2}\int_{0}^{\infty}du\,\frac{(1+u/n)^{-2n}}{u^{1/2}} = O(n^{1/2+\mathcal{M}_{m}})
\end{split}
\end{equation}
where we changed variables $u=n(y-1)$ and used the fact that the limit $n \to \infty$ of the last integral is finite.
\end{proof}
It remains to show that the error terms in \eqref{facbound} only give rise to sub-leading contributions in the summation \eqref{partitionsum}.

\begin{proposition}
	\label{prop:errors}
Consider the sum $Z_{n,m}$ in \eqref{partitionsum}, the summands of which consist of a product of $m$ factors. Suppose that $1 \leq c \leq m$ factors are replaced with the error bound in \eqref{facbound}, while the remaining factors are replaced with the leading $\Gamma$-factor in \eqref{fpq}. Denoting the resulting sum by $Z^{(c)}_{n,m}$, we have
\begin{equation}
Z^{(c)}_{n,m} = O(n^{\mathcal{M}_{m}}), \qquad n \to \infty.
\end{equation}
\end{proposition}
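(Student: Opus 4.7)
My approach is to mirror the preceding proposition's proof, adapting the argument to accommodate the factorized structure of the error bound \eqref{facbound}. The first step is to substitute \eqref{facbound} into each of the $c$ error factors of $Z^{(c)}_{n,m}$, which pulls out a global prefactor $(\sqrt{n})^c$ and replaces each affected $\Gamma$ by a product of two independent Gammas in $k_i$ and $k_{i+1}$. Each $\Gamma$-factor is then written as a Gaussian integral: leading factors introduce a single variable $x_i$ with weight $x_i^{2r_i+2s_i}e^{-x_i^2}$ exactly as in \eqref{zngauss}, while each error factor, after a rescaling $y \mapsto y\sqrt{2}$ that absorbs the $2^{k_i+k_{i+1}}$, introduces two independent variables $u_i,v_i$ with weights $u_i^{2r_i} v_i^{2s_i} e^{-(u_i^2+v_i^2)/2}$.

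Summing over the $k_j$'s using $\sum_{k=1}^n (xy)^{2k-2}/(2k-2)! = \cosh_{n-1}(xy)$ produces $m$ couplings among the $m+c$ variables. Because each error factor factorizes, the cyclic chain of couplings from the leading proof is broken at every error position, yielding a disjoint union of $c$ open paths. I would then follow the previous proof's contour strategy: bound all but one $\cosh_{n-1}$ by $\cosh$, represent the remaining one by $\oint \frac{z^{-2n+1}}{1-z^2}e^{zxy}\,\frac{dz}{2\pi i}$, integrate out the Gaussians via Wick's theorem, and deform the contour to pick up the branch-cut contribution at $z=1$. The matrix $A(z)$ is now block-diagonal with only one path-tridiagonal block carrying $z$-dependence; the remaining blocks yield $n$-independent constants.

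The main obstacle is tracking powers of $n$ carefully. A naive transplantation of the preceding proof yields $(\sqrt{n})^c \cdot O(n^{\mathcal{M}_m+1/2})$, which exceeds the target by $n^{(c+1)/2}$. The needed compensation should come from two sources. Structurally, the path-block determinant is proportional to $1-z^2$ rather than to $(z-1)((m-1)z+(m+1))$, and the Wick contractions involving labels attached to error vertices give $\sigma_{jk}(z)$ entries with different polynomial numerators. Heuristically, Stirling's formula shows that at the dominant scale $k_i \sim n$ one has
\begin{equation*}
\frac{2^{k_i+k_{i+1}}\Gamma(k_i+r_i-1/2)\Gamma(k_{i+1}+s_i-3/2)}{\Gamma(k_i+k_{i+1}+r_i+s_i-3/2)} = O(1/n),
\end{equation*}
so the explicit $\sqrt{n}$ in \eqref{facbound} is counteracted, making each error factor effectively $O(1/\sqrt{n})$ smaller than its leading counterpart. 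Converting this heuristic accounting into the contour-integral formalism, or alternatively implementing a direct region-splitting to handle the loose regime $k_i \ll n$, is where I expect the bulk of the technical work to lie.
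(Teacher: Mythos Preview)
Your structural observations are right: the factorised error bound \eqref{facbound} breaks the cyclic product into $c$ open chains, and the Gaussian/contour machinery of the preceding proposition is the correct toolkit. The gap is in how you deploy the contour integral, and this is exactly what produces the $n^{(c+1)/2}$ overshoot you cannot explain.

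The point you are missing is that once the error factors are replaced by the factorised bound, the entire sum $Z^{(c)}_{n,m}$ is itself a \emph{product} of $c$ independent sums, one per chain. The paper makes this explicit: it writes $Z^{(c)}_{n,m}$ as $\prod E_{v,\sigma}$, where each $E_{v,\sigma}$ carries one $\sqrt{n}$ prefactor, one pair of boundary terms $a,b$ (absorbing the $2^{k}$ via a $\sqrt{2}$ rescaling), and a string of leading $\Gamma$-factors. Each $E_{v,\sigma}$ is then bounded by its \emph{own} contour integral. The resulting matrix $\tilde{A}(z)$ is genuinely tridiagonal (not cyclic), with $\det \tilde{A}(z) = (1-z^2)2^{1-v}$ and $(\tilde{A}^{-1})_{jk}$ of the form $(a_{jk}+b_{jk}z+c_{jk}z^2)/(1-z^2)$. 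Wick's theorem then gives an integrand with a branch singularity $(z^2-1)^{-3/2-\mathcal{E}_v}$, and the branch-cut integral is $O(n^{\mathcal{E}_v+1/2})$; together with the $\sqrt{n}$ prefactor this makes $E_{v,\sigma} = O(n^{\mathcal{E}_v+1})$. Because $\sum_v \mathcal{E}_v = \mathcal{M}_m - c$, the product over the $c$ chains is $O(n^{\mathcal{M}_m})$, exactly as required.

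Your approach of using a single contour integral and bounding the remaining $\cosh_{n-1}$'s by $\cosh$ turns the other $c-1$ chains into $n$-independent constants, so their $\sqrt{n}$ prefactors are never compensated by any branch-cut integration: that is precisely the source of the spurious $n^{(c-1)/2}$ (your ``$n^{(c+1)/2}$'' also overcounts the Wick power in the $z$-block, but the essential issue is the uncompensated prefactors). No refinement of the single-contour analysis, and no Stirling-type region splitting, is needed; the fix is purely structural---recognise the product decomposition and run the contour argument once per chain.
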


\begin{proof}
Due to the factorized form of \eqref{facbound}, the sum \eqref{partitionsum} is a product of $c$ terms of the form 
%\begin{equation}
%\sum_{\substack{k_{v(i)},\ldots,k_{v(i+1)}\\l_{v(i)},\ldots,l_{v(i+1)}}}b^{(s_{k_{(v(i))}})}_{k_{v(i)}}f^{(r_{v(i)},s_{v(i)})}_{k_{v(i)},k_{v(i)+1}}\ldots f^{(r_{v(i+1)-1},s_{v(i+1)-1})}_{k_{v(i+1)-1},k_{v(i+1)}}a^{(r_{k_{v(i+1)}})}_{k_{v(i+1)}}
%\end{equation} 

\begin{equation}
E_{v,\sigma} := \sum_{\substack{k_{1},\ldots,k_{v}}}\frac{b^{(2s_{\sigma(1)})}_{2k_{1}-1}\Gamma^{(2s_{\sigma(2)},2r_{\sigma(2)})}_{2k_{1}-2,2k_{2}-1}\ldots \Gamma^{(2s_{\sigma(v)},2r_{\sigma(v)})}_{2k_{v-1}-2,2k_{v}-1}a^{(2r_{\sigma(v+1)})}_{2k_{v}-2}}{\Gamma(2k_{1}-1)\Gamma(2k_{2}-1)\ldots \Gamma(2k_{v+1}-1)}
\end{equation}
for some permutation $\sigma$ (corresponding to a re-labelling of the $k_{i}'s$) and $1 \leq v \leq j$. The boundary terms $a$ and $b$ come directly from the error term \eqref{facbound} and are given by
\begin{align}
a^{(2r_{\sigma(v+1)})}_{2k_{v+1}-2} &= \sqrt{n}2^{k_{v+1}}\Gamma\left(k_{v+1}+r_{\sigma(v+1)}-3/2\right)\\
b^{(2s_{\sigma(1)})}_{2k_{1}-1} &= 2^{k_{1}}\Gamma\left(k_{1}+s_{\sigma(1)}-1/2\right)
\end{align}
The asymptotic behaviour of $E_{v,\sigma}$ as $n \to \infty$ can be estimated according to the programme already outlined for the leading term. We get 

\begin{equation}
|E_{v,\sigma}| \leq  c\sqrt{n}\oint_{}\frac{dz}{2\pi i}\frac{z^{-2n+1}}{1-z^{2}}\,\int_{\mathbb{R}^{v+1}}x_{1}^{2s_{\sigma(1)}}x_{v+1}^{2r_{\sigma(v+1}-2}\prod_{i=2}^{v}x_{i}^{2s_{\sigma(i)}+2r_{\sigma(i)}}\,\mathrm{exp}\left(-\mathbf{x}^{\mathrm{T}}\tilde{A}(z)\mathbf{x}\right)\,dx_{1}\ldots dx_{v+1} \label{gausserror}
\end{equation}
This time $\tilde{A}(z)$ is symmetric and tridiagonal:
\begin{equation}
\tilde{A}(z) = \begin{pmatrix}1 & -\sqrt{2}z/2 & 0 & 0 & \ldots & 0 & 0\\
-\sqrt{2}z/2 & 1 & -\alpha_{1}/2 & 0 & 0 & \ldots & 0\\
0 & -\alpha_{1}/2 & 1 & -\alpha_{2}/2 & 0 & \ldots & 0\\
\vdots & \vdots & \vdots & \ldots & \vdots & \vdots & \vdots\\
0 & \ldots & 0 & -\alpha_{v-2}/2 & 1 & -\alpha_{v-1}/2 & 0\\
0 & \ldots & 0 & 0 & -\alpha_{v-1}/2 & 1 & -\sqrt{2}\alpha_{v}/2\\
0 & 0 & \ldots & 0 & 0 & -\sqrt{2}\alpha_{v}/2 & 1
\end{pmatrix}
\end{equation}
The Gaussian integral \eqref{gausserror} is again evaluated by Wick's theorem. It's easy to show that
\begin{equation}
\begin{split}
\mathrm{det}(\tilde{A}(z)) &= (1-z^{2})2^{1-v}\\
\tilde{A}^{-1}(z)_{jk} &= \frac{a_{jk}+b_{jk}z+c_{jk}z^{2}}{1-z^{2}}
\end{split}
\end{equation}
where $a_{jk}$, $b_{jk}$ and $c_{jk}$ are constants independent of $n$ and $z$. Therefore \eqref{gausserror} gives a finite linear combination of terms of the form 
\begin{equation}
\sqrt{n}\oint_{}\,\frac{dz}{2\pi i}\frac{z^{-2n+1}P(\alpha z)}{(z^{2}-1)^{3/2+\mathcal{E}_{v}}}
\end{equation}
where $\mathcal{E}_{v} = s_{\sigma(1)}+\sum_{i=2}^{v}(s_{\sigma(i)}+r_{\sigma(i)})+r_{\sigma(v+1)}-1$ and $P(\alpha z)$ is an $n$-independent polynomial. The asymptotics are now dominated by the two branch cuts along $(\pm 1,\infty)$ and one easily sees that the integrals along these cuts are both $O(n^{\mathcal{E}_{v}+1})$ as $n \to \infty$. Taking the product over all such factors gives a bound of order $O(n^{\mathcal{M}_{m}})$, which is what we wanted to show.
\end{proof}
\appendix
\renewcommand{\thelemma}{\Alph{section}\arabic{lemma}}
\section{Miscellaneous Lemmas}
In \cite{KPTTZ15} the determinant of the matrix $A(z)$ in \eqref{cyctridiag} was evaluated explicitly. Due to the application of Wick's theorem, we need the inverse too.
\begin{lemma}
	\label{lem:inv}
The inverse of the cyclic tridiagonal matrix $A(z)$ in \eqref{cyctridiag} is given by
\begin{equation}
(A(z)^{-1})_{rs} := \sigma_{rs}(z) = \sigma(0)_{rs} - \frac{p_{rs}(z)}{D_{m}(z)}
\end{equation}
where $D_{m}(z) = \mathrm{det}(A(z))$, $A_{s} = \prod_{j=1}^{s}\alpha_{j}$ and
\begin{equation}
\sigma(0)_{rs} = (-1)^{r+s}\frac{A_{s}}{A_{r}}2r\frac{j-s+1}{j+1}, \qquad r \leq s
\end{equation}
and
\begin{equation}
p_{rs}(z) = 2z^{2}\sigma(0)_{mm}\sigma(0)_{1s}\sigma(0)_{r1}-2z(z\sigma(0)_{m1}-2)\sigma(0)_{rm}\sigma(0)_{1s}
\end{equation}
\end{lemma}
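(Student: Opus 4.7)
The plan is to exhibit $A(z)$ as a rank-two perturbation of the tridiagonal matrix $A(0)$ and invert via the Sherman-Morrison-Woodbury identity. Explicitly,
$$A(z) = A(0) - \tfrac{z}{2}\bigl(e_1 e_m^{\mathrm{T}} + e_m e_1^{\mathrm{T}}\bigr) = A(0) + UV^{\mathrm{T}},$$
with $U = -\tfrac{z}{2}[\,e_1\ \ e_m\,]$ and $V = [\,e_m\ \ e_1\,]$. Writing $\sigma(0) := A(0)^{-1}$, the Woodbury identity then gives
$$A(z)^{-1} = \sigma(0) - \sigma(0)\, U\bigl(I_2 + V^{\mathrm{T}}\sigma(0)\, U\bigr)^{-1} V^{\mathrm{T}}\sigma(0),$$
which is already of the form ``$\sigma(0)$ minus a rank-two correction with a $2\times 2$ determinant in the denominator'', matching the structure of the claim.

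First I would establish the formula for $\sigma(0)_{rs}$. Conjugating $A(0)$ by $D=\mathrm{diag}(A_1,\ldots,A_m)$ absorbs the signs $\alpha_j$ and reduces the problem to the canonical symmetric tridiagonal matrix $T$ with ones on the diagonal and $-\tfrac12$ on the two off-diagonals. The inverse of $T$ is standard: a short cofactor/recursion argument gives $(T^{-1})_{rs}=2r(m-s+1)/(m+1)$ for $r\le s$ (taking the ``$j$'' in the lemma statement to mean $m$). Undoing the diagonal conjugation, $\sigma(0)=D^{-1}T^{-1}D^{-1}$, together with the sign convention $-\alpha_j/2$ in the off-diagonal of $A(0)$, produces the $(-1)^{r+s}A_s/A_r$ prefactor in the claim. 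Small checks with $m=2,3$ and various sign patterns fix the conventions.

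Next, the $2\times 2$ matrix $I_2 + V^{\mathrm{T}}\sigma(0) U$ has entries built solely from the corner values $\sigma(0)_{1,1},\sigma(0)_{1,m},\sigma(0)_{m,1},\sigma(0)_{m,m}$, so its determinant is a polynomial of degree at most two in $z$. Multiplying by $\det A(0)$ must recover $D_m(z)=\det A(z)$ as computed already in \cite{KPTTZ15} (via the matrix determinant lemma), and this is the identity that produces the denominator in the statement. Expanding the $(r,s)$ entry of the correction
$$\sigma(0)\,U\,\bigl(I_2+V^{\mathrm{T}}\sigma(0) U\bigr)^{-1}V^{\mathrm{T}}\sigma(0)$$
yields a linear combination of the four products $\sigma(0)_{r,a}\sigma(0)_{b,s}$ with $a,b\in\{1,m\}$, each with an explicit polynomial-in-$z$ coefficient. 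Collecting these terms and using the symmetry $\sigma(0)_{r,s}=\sigma(0)_{s,r}$ together with the explicit values at the corners should produce exactly the stated numerator
$$p_{rs}(z) = 2z^2\sigma(0)_{mm}\sigma(0)_{1s}\sigma(0)_{r1} - 2z\bigl(z\,\sigma(0)_{m1}-2\bigr)\sigma(0)_{rm}\sigma(0)_{1s}.$$

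The main obstacle is bookkeeping rather than anything conceptual: one must carefully track the sign patterns $\alpha_j$ through the diagonal conjugation, and reconcile the somewhat asymmetric form in $(r,s)$ of the claimed $p_{rs}(z)$ (it mixes the ``$r\le s$'' and ``$r>s$'' branches of $\sigma(0)$) with the manifestly symmetric output of Woodbury. Useful consistency checks are: vanishing of the correction at $z=0$; consistency at $z=\pm A_m$ (zeros of $D_m(z)$), where the residues of $\sigma_{rs}(z)$ must reproduce the null-vectors of $A(z)$; and direct verification for $m=2,3$, which also catches any typographical ambiguity in the definition of $A_s$ near $s=m$.
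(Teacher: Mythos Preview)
Your proposal is correct and follows essentially the same route as the paper: writing $A(z)$ as a rank-two perturbation of the tridiagonal $A(0)$ and applying the Sherman--Morrison--Woodbury identity, then reading off the $2\times2$ block and the known tridiagonal inverse. Your additional step of conjugating by $D=\mathrm{diag}(A_1,\ldots,A_m)$ to reduce $A(0)$ to the sign-free case is a clean way to justify the formula for $\sigma(0)_{rs}$, which the paper simply quotes as ``classical''.
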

\begin{proof}
The matrix $A(z)$ in \eqref{cyctridiag} is called a cyclic tri-diagonal matrix. Its inverse can be calculated by noting that it is a rank $2$ perturbation of the tridiagonal matrix $A(0)$:
\begin{equation}
A(z) = A(0)+R(z)S^{\mathrm{T}}
\end{equation}
where $R(z)$ is a $j \times 2$ matrix of zeros except the corners $R(z)_{12} = R(z)_{j2} = -z/2$. Similarly $S$ is a $j \times 2$ matrix of zeros except the corners $S_{11}=1$ and $S_{j2}=1$. The inverse now follows from the algebraic identity
\begin{equation}
A(z)^{-1} = A(0)^{-1}-A(0)^{-1}R(z)(I_{2}+S^{\mathrm{T}}A(0)^{-1}R(z))^{-1}S^{\mathrm{T}}A(0)^{-1}
\end{equation}
The important part for us is the $2 \times 2$ matrix
\begin{equation}
\begin{split}
F &:= (I_{2}+S^{\mathrm{T}}A(0)^{-1}R(z))^{-1}\\
&= \frac{1}{\mathcal{D}_{j}(z)}\begin{pmatrix} z(A(0)^{-1})_{j1}-2 & -z(A(0)^{-1})_{11}\\ -z(A(0)^{-1})_{jj} & z(A(0)^{-1})_{j1}-2
\end{pmatrix}
\end{split}
\end{equation}
where $D_{j}(z) = z^{2}((A(0)^{-1})_{11}(A(0)^{-1})_{jj}-(A(0)^{-1})_{j,1}^{2})+4z(A(0)^{-1})_{j,1}-4$. The inverse of the tridiagonal matrix $A(0)$ can be calculated via classical recurrence relations which can be solved explicitly in this case:
\begin{equation}
(A(0))^{-1}_{rs} =: \sigma(0)_{rs} = (-1)^{r+s}\frac{A_{s}}{A_{r}}2r\frac{j-s+1}{j+1}, \qquad r \leq s
\end{equation}
This completes the proof of the Lemma.
\end{proof}

\textbf{Acknowledgements:} I gratefully acknowledge the support of the Leverhulme Trust Early Career Fellowship (ECF-2014-309).
\bibliographystyle{plain}
\bibliography{fbm_project_refs3}

\begin{thebibliography}{10}

\bibitem{AHM11}
Yacin Ameur, H{\aa}kan Hedenmalm, and Nikolai Makarov.
\newblock Fluctuations of eigenvalues of random normal matrices.
\newblock {\em Duke Math. J.}, 159(1):31--81, 2011.

\bibitem{Bee13}
C.W.J. Beenakker, J.M. Edge, J.P. Dahlhaus, D.I. Pikulin, Shuo Mi, and
  M.~Wimmer.
\newblock {W}igner-{P}oisson {S}tatistics of {T}opological {T}ransitions in a
  {J}osephson {J}unction.
\newblock {\em Phys. Rev. Lett.}, 111:037001, 2013.

\bibitem{BD97}
Pavel Bleher and Xiaojun Di.
\newblock Correlations between zeros of a random polynomial.
\newblock {\em J. Statist. Phys.}, 88(1-2):269--305, 1997.

\bibitem{BS09}
A.~Borodin and C.~D. Sinclair.
\newblock The {G}inibre ensemble of real random matrices and its scaling
  limits.
\newblock {\em Comm. Math. Phys.}, 291(1):177--224, 2009.

\bibitem{E93}
A.~Edelman.
\newblock The {P}robability that a {R}andom {R}eal {G}aussian {M}atrix {H}as
  $k$ {R}eal {E}igenvalues, {R}elated {D}istributions, and the {C}ircular
  {L}aw.
\newblock {\em Journal of Multivariate Analysis}, 60:203--232, 1997.

\bibitem{EKS94}
Alan Edelman, Eric Kostlan, and Michael Shub.
\newblock How many eigenvalues of a random matrix are real?
\newblock {\em J. Amer. Math. Soc.}, 7(1):247--267, 1994.

\bibitem{F99}
P.~J. Forrester.
\newblock Fluctuation formula for complex random matrices.
\newblock {\em J. Phys. A}, 32(13):L159--L163, 1999.

\bibitem{For13}
Peter~J. Forrester.
\newblock Diffusion processes and the asymptotic bulk gap probability for the
  real {G}inibre ensemble.
\newblock {\em J. Phys. A}, 48(32):324001, 14, 2015.

\bibitem{FM12}
Peter~J. Forrester and Anthony Mays.
\newblock Pfaffian point process for the {G}aussian real generalised eigenvalue
  problem.
\newblock {\em Probab. Theory Related Fields}, 154(1-2):1--47, 2012.

\bibitem{FN08}
Peter~J. Forrester and Taro Nagao.
\newblock Skew orthogonal polynomials and the partly symmetric real {G}inibre
  ensemble.
\newblock {\em J. Phys. A}, 41(37):375003, 19, 2008.

\bibitem{FN07}
P.J. Forrester and T.~Nagao.
\newblock Eigenvalue {S}tatistics of the {R}eal {G}inibre {E}nsemble.
\newblock {\em Phys. Rev. Lett.}, 99:050603, 2007.

\bibitem{FD14}
Yan~V. Fyodorov and Pierre Le~Doussal.
\newblock Topology trivialization and large deviations for the minimum in the
  simplest random optimization.
\newblock {\em J. Stat. Phys.}, 154(1-2):466--490, 2014.

\bibitem{FK15}
Y.V. Fyodorov and B.A. Khoruzhenko.
\newblock A {N}onlinear {A}nalogue of {M}ay-{W}igner {I}nstability
  {T}ransition.
\newblock eprint = \texttt{1509.05737}, 2015.

\bibitem{FKS13}
Y.V. Fyodorov, B.A. Khoruzhenko, and N.J. Simm.
\newblock Fractional {B}rownian {M}otion with {H}urst index $h=0$ and the
  {G}aussian {U}nitary {E}nsemble.
\newblock eprint = \texttt{1312.0212}, 2013.

\bibitem{Gin65}
Jean Ginibre.
\newblock Statistical ensembles of complex, quaternion, and real matrices.
\newblock {\em J. Mathematical Phys.}, 6:440--449, 1965.

\bibitem{Joh98}
Kurt Johansson.
\newblock On fluctuations of eigenvalues of random {H}ermitian matrices.
\newblock {\em Duke Math. J.}, 91(1):151--204, 1998.

\bibitem{Jon82}
Dag Jonsson.
\newblock Some limit theorems for the eigenvalues of a sample covariance
  matrix.
\newblock {\em J. Multivariate Anal.}, 12(1):1--38, 1982.

\bibitem{KPTTZ15}
E.~Kanzieper, M.~Poplavskyi, C.~Timm, R.~Tribe, and O.~Zaboronski.
\newblock What is the probability that a large random matrix has no real
  eigenvalues?
\newblock eprint = \texttt{arXiv:1503.07926}, 2015.

\bibitem{KG05}
Eugene Kanzieper and Gernot Akemann.
\newblock Statistics of real eigenvalues in {G}inibre's ensemble of random real
  matrices.
\newblock {\em Phys. Rev. Lett.}, 95(23):230201, 4, 2005.

\bibitem{Kop15}
P.~Kopel.
\newblock Linear {S}tatistics of {N}on-{H}ermitian matrices {M}atching the
  {R}eal or {C}omplex {G}inibre {E}nsemble to {F}our {M}oments.
\newblock eprint = \texttt{1510.02987}, 2015.

\bibitem{LS91}
N.~Lehmann and H-J. Sommers.
\newblock Eigenvalue statistics of random real matrices.
\newblock {\em Physical Review Letters}, 67:941--944, 1991.

\bibitem{LS15}
A.~Lodhia and N.J. Simm.
\newblock Mesoscopic linear statistics of {W}igner matrices.
\newblock eprint = \texttt{arXiv:1503.03533}, 2015.

\bibitem{Masclt}
N.~B. Maslova.
\newblock The distribution of the number of real roots of random polynomials.
\newblock {\em Teor. Verojatnost. i Primenen.}, 19:488--500, 1974.

\bibitem{Masvar}
N.~B. Maslova.
\newblock The variance of the number of real roots of random polynomials.
\newblock {\em Teor. Verojatnost. i Primenen.}, 19:36--51, 1974.

\bibitem{M72}
R.M. May.
\newblock Will a {L}arge {C}omplex {S}ystem be {S}table?
\newblock {\em Nature}, 238:413--414, 1972.

\bibitem{MHNSS15}
D.~Mehta, J.D. Hauenstein, M.~Niemerg, N.J. Simm, and D.A. Stariolo.
\newblock Energy landscape of the finite-size mean-field 2-spin spherical model
  and topology trivialization.
\newblock {\em Phys. Rev. E}, 91:022133, 2015.

\bibitem{Meh04}
Madan~Lal Mehta.
\newblock {\em Random matrices}, volume 142 of {\em Pure and Applied
  Mathematics (Amsterdam)}.
\newblock Elsevier/Academic Press, Amsterdam, third edition, 2004.

\bibitem{OR14}
S.~O'Rourke and D.~Renfrew.
\newblock Central limit theorem for linear eigenvalue statistics of elliptic
  random matrices.
\newblock eprint = \texttt{1410.4586}, 2014.

\bibitem{P13}
R.B. Paris.
\newblock Asymptotics of the {G}auss hypergeometric function with large
  parameters, {I}.
\newblock {\em Journal of Classical Analysis}, 2(2):183--203, 2013.

\bibitem{RS06}
B.~Rider and Jack~W. Silverstein.
\newblock Gaussian fluctuations for non-{H}ermitian random matrix ensembles.
\newblock {\em Ann. Probab.}, 34(6):2118--2143, 2006.

\bibitem{VR07}
Brian Rider and B{\'a}lint Vir{\'a}g.
\newblock The noise in the circular law and the {G}aussian free field.
\newblock {\em Int. Math. Res. Not. IMRN}, (2):Art. ID rnm006, 33, 2007.

\bibitem{GFF}
Scott Sheffield.
\newblock Gaussian free fields for mathematicians.
\newblock {\em Probab. Theory Related Fields}, 139(3-4):521--541, 2007.

\bibitem{Sin07}
Christopher~D. Sinclair.
\newblock Averages over {G}inibre's ensemble of random real matrices.
\newblock {\em Int. Math. Res. Not. IMRN}, (5):Art. ID rnm015, 15, 2007.

\bibitem{SW08}
H-J. Sommers and W.~Wieczorek.
\newblock General eigenvalue correlations for the {G}inibre ensemble.
\newblock {\em J. Phys. A: Math. Theor.}, 41(40), 2008.

\bibitem{S15}
E.~Subag.
\newblock The complexity of spherical p-spin models - a second moment approach.
\newblock eprint = \texttt{1504.02251}, 2015.

\bibitem{TV13}
T.~Tao and V.~Vu.
\newblock Local universality of zeroes of random polynomials.
\newblock eprint = \texttt{1307.4357}, 2013.

\bibitem{TV15}
Terence Tao and Van Vu.
\newblock Random matrices: universality of local spectral statistics of
  non-{H}ermitian matrices.
\newblock {\em Ann. Probab.}, 43(2):782--874, 2015.

\bibitem{TW98}
Craig~A. Tracy and Harold Widom.
\newblock Correlation functions, cluster functions, and spacing distributions
  for random matrices.
\newblock {\em J. Statist. Phys.}, 92(5-6):809--835, 1998.

\bibitem{TKZ12}
Roger Tribe, Siu~Kwan Yip, and Oleg Zaboronski.
\newblock One dimensional annihilating and coalescing particle systems as
  extended {P}faffian point processes.
\newblock {\em Electron. Commun. Probab.}, 17:no. 40, 7, 2012.

\bibitem{TZ11}
Roger Tribe and Oleg Zaboronski.
\newblock Pfaffian formulae for one dimensional coalescing and annihilating
  systems.
\newblock {\em Electron. J. Probab.}, 16:no. 76, 2080--2103, 2011.

\bibitem{TZ14}
Roger Tribe and Oleg Zaboronski.
\newblock The {G}inibre evolution in the large-{$N$} limit.
\newblock {\em J. Math. Phys.}, 55(6):063304, 26, 2014.

\bibitem{CW15}
C.~Webb.
\newblock On the logarithm of the characteristic polynomial of the {G}inibre
  ensemble.
\newblock eprint = \texttt{1507.08674}, 2015.

\end{thebibliography}
\end{document}